\numberwithin{equation}{section}
\newtheorem{theorem}{Theorem}[section]
\newtheorem{lemma}[theorem]{Lemma}
\theoremstyle{definition}
\newtheorem{definition}[theorem]{Definition}
\newtheorem{remark}[theorem]{Remark}
\newtheorem{notation}[theorem]{Notation}
\newtheorem{problem}[theorem]{Problem}
\newcommand{\ca}{C*-algebra}
\newcommand{\Aut}{{\mathrm{Aut}}}
\newcommand{\Ad}{{\mathrm{Ad}}}
\newcommand{\id}{{\mathrm{id}}}
\newcommand{\Inn}{{\mathrm{Inn}}}
\newcommand{\cH}{{\mathcal H}}
\newcommand{\cK}{{\mathcal K}}
\newcommand{\cO}{{\mathcal O}}
\newcommand{\cZ}{{\mathcal Z}}
\newcommand{\Cb}{{\mathbb C}}
\newcommand{\Zb}{{\mathbb Z}}
\newcommand{\Tb}{{\mathbb T}}
\newcommand{\Rb}{{\mathbb R}}
\newcommand{\Nb}{{\mathbb N}}
\newcommand{\card}{{\operatorname{card}}}
\newcommand{\unit}{1}
\newcommand{\eps}{\varepsilon}
\newcommand{\flip}{{\mathrm{\varphi}}}
\newcommand{\WR}{{\mathrm{WRok}}}
\newcommand{\Rok}{{\mathrm{Rok}}}
\newcommand{\homeo}{{\mathrm{Homeo}}}
\newcommand{\Colon}{:}
\begin{document}

\title{Borel complexity and automorphisms of C*-algebras}

\author{David Kerr}
\author{Martino Lupini}
\author{N.~Christopher Phillips}

\address{\hskip-\parindent
David Kerr, Department of Mathematics, Texas A{\&}M University,
College Station TX 77843-3368, U.S.A.}
\email{kerr@math.tamu.edu}

\address{\hskip-\parindent
Martino Lupini, Department of Mathematics and Statistics,
4700 Keele Street, Toronto Ontario M3J 1P3 Canada,
and Fields Institute, 222 College Street,
 Toronto Ontario M5T 3J1, Canada.
}

\email{mlupini@mathstat.yorku.ca}

\address{\hskip-\parindent
N.~Christopher Phillips,
Department of Mathematics, University of Oregon,
Eugene OR 97403-1222, U.S.A.,
and Department of Mathematics, University of Toronto,
Room 6290, 40 St.\  George St., Toronto Ontario M5S 2E4, Canada.}
\email{ncp@darkwing.uoregon.edu}

\subjclass[2000]{Primary 46L40; Secondary 46L55, 03E15}
\keywords{Automorphisms, C*-algebras, $\mathrm{II}_1$ factors, Borel complexity}

\begin{abstract}
We show that if $A$ is $\mathcal{Z}$, $\mathcal{O}_2$, $\mathcal{O}_{\infty}$, a UHF algebra of infinite type, or the tensor product of a UHF algebra of infinite type and $\mathcal{O}_{\infty}$, then the conjugation action $\mathrm{Aut}(A) \curvearrowright \mathrm{Aut}(A)$ is generically turbulent for the point-norm topology. We moreover prove that if $A$ is either (i) a separable C*-algebra which is stable under tensoring with $\mathcal{Z}$ or $\mathcal{K}$, or (ii) a separable ${\mathrm{II}}_1$ factor which is McDuff or a free product of ${\mathrm{II}}_1$ factors, then the approximately inner automorphisms of $A$ are not classifiable by countable structures.
\end{abstract}

\date{11~April 2014}

\maketitle

\section{Introduction}\label{Sec_1}

A major program in descriptive set theory over the last
twenty-five years has been to analyze
the relative complexity of classification problems
by encoding these as equivalence relations on standard Borel spaces.
If one can naturally parametrize the objects
of a classification problem as points in a standard
Borel space equipped with the relation of isomorphism,
then one should expect that
any reasonable assignment of complete invariants
will be expressible within this descriptive framework,
with the invariants being similarly parametrized.
Accordingly, given equivalence relations $E$ and $F$
on standard Borel spaces $X$ and $Y$,
one says that $E$ is {\emph{Borel reducible}} to $F$
if there is a Borel
map $\theta \Colon X \to Y$ such that, for all $x_1, x_2 \in X$,
\[
\theta (x_1 ) F \theta (x_2 ) \Longleftrightarrow x_1 E x_2.
\]

Borel reducibility to the relation of equality
on $\Rb$ is the definition of {\emph{smoothness}}
for an equivalence relation,
which was introduced by Mackey in the 1950s.
In a celebrated theorem,
Glimm verified a conjecture of Mackey
by showing that the classification
of the irreducible representations of a separable \ca{}
is smooth if and only if the \ca{} is type I~\cite{Gli60}.

A much more generous notion of classification
is that of Borel reducibility to the isomorphism
relation on the space of countable structures
of some countable language \cite[Definition 2.38]{Hjo00}.
This {\emph{classification by countable structures}}
is equivalent to Borel reducibility
to the orbit equivalence relation
of a Borel action of the infinite permutation group $S_{\infty}$
on a Polish space \cite[Section 2.7]{BecKec96}.
The isomorphism relation on any kind
of countable algebraic structure can be parametrized
by such an orbit equivalence relation
(see Example~2 in~\cite{ForWei04}).
Nonsmooth examples of classification
by countable structures include Elliott's classification
of AF algebras in terms of their ordered $K$-theory~\cite{Ell76}
and the Giordano-Putnam-Skau classification
of minimal homeomorphisms of the Cantor set
up to strong orbit equivalence~\cite{GioPutSka95}.

A classification problem is often naturally parametrized
as the orbit equivalence relation
of a continuous action $G \curvearrowright X$
of a Polish group on a Polish space.
Starting from the fact that every Borel map between Polish spaces
is Baire measurable and hence continuous on
a comeager subset,
one might then aim to analyze Borel complexity in this setting
by using methods of topological dynamics and Baire category.
As a basic example,
one can show that the orbit equivalence relation
for the action $G \curvearrowright X$
fails to be smooth whenever every orbit is dense and meager.
By locally strengthening the orbit density condition
in this obstruction to smoothness,
Hjorth formulated the following concept of turbulence
(Definition~\ref{D_3X31_Turb})
and proved that it obstructs
classification by countable structures~\cite{Hjo00}.

\begin{definition}\label{D_3X24_LocOrb}
Let $G \curvearrowright X$ be an action of a topological group~$G$
on a topological space~$X$.
For $x \in X$,
an open set $U \subseteq X$ which contains~$x$,
and open set $V \subseteq G$ which contains
the identity element $1 \in G$,
we define the {\emph{local orbit}}
$\cO (x, U ,V)$ to be the set of all $y \in U$ for which
there exist $n \in \Nb$ and $g_1, g_2, \ldots, g_n \in V$ satisfying
$g_k g_{k - 1} \cdots g_1 x \in U$ for each $k = 1, 2, \ldots, n - 1$
and $g_n g_{n - 1} \cdots g_1 x = y$.
\end{definition}

\begin{definition}\label{D_3X31_Turb}
Let $G \curvearrowright X$ be an action of a Polish group~$G$
on a Polish space~$X$.
A point $x \in X$ is {\emph{turbulent}}
if for every $U$ and $V$ as in Definition~\ref{D_3X31_Turb},
the closure of $\cO (x, U, V)$ has nonempty interior.
We refer to the orbit of $x$ as
a {\emph{turbulent orbit}}.
The action $G \curvearrowright X$ is said to be {\emph{turbulent}}
if every orbit is dense, turbulent, and meager,
and {\emph{generically turbulent}} if every orbit is meager
and there exist a dense orbit and a turbulent orbit.
\end{definition}

The definition of a turbulent orbit is sensible
because one point in an orbit is turbulent
if and only if all points in the orbit are turbulent.
Generic turbulence is defined differently
in Definition 3.20 of~\cite{Hjo00}.
The equivalence of conditions (I) and~(VI)
in Theorem 3.21 of~\cite{Hjo00}
shows that our definition is equivalent.

As shown in Section~3.2 of~\cite{Hjo00},
if $G \curvearrowright X$ is generically turbulent
then for every equivalence relation $F$
arising from a continuous action of $S_{\infty}$ on a Polish space $Y$
and every Baire measurable map $\theta \Colon X \to Y$
such that $x_1 Ex_2$ implies $\theta (x_1) F \theta (x_2)$,
there exists a comeager set $C\subseteq X$
such that $\theta (x_1 )F\theta (x_2 )$ for all $x_1, x_2 \in C$.
It follows that the orbit equivalence relation on $X$
does not admit classification by countable structures.

In~\cite{ForWei04} Foreman and Weiss established
generic turbulence for the action
of the space of measure-preserving automorphisms
of a standard atomless probability space on itself by conjugation.
In an analogous noncommutative setting, Kerr, Li,
and Pichot showed that generic turbulence also occurs for
the conjugation action $\Aut (R) \curvearrowright\Aut (R)$
where $\Aut (R)$ is the space
of automorphisms of the hyperfinite ${\mathrm{II}}_1$
factor $R$~\cite{KerLiPic10}.
This raises the question of whether something similar can be said
about the Borel complexity
of automorphism groups in the topological framework
of separable nuclear \ca{s}, especially those
that enjoy the regularity properties that have come to play
a prominent role in the Elliott classification program~\cite{EllTom08}.

For the topological analogue of an atomless probability space,
namely the Cantor set $X$,
the group $\homeo (X)$ of homeomorphisms from $X$
to itself can be canonically identified
with the set of automorphisms
of the Boolean algebra of clopen subsets of $X$
(see \cite[Section~2]{CamGao01}),
and thus the relation of conjugacy in $\homeo (X)$
is classifiable by countable structures.
In particular there is no generic turbulence,
in contrast to the measurable setting.
On the other hand, by \cite[Theorem~5]{CamGao01},
the relation of conjugacy in $\homeo (X)$
has the maximum complexity among all equivalence relations
that are classifiable by countable structures.
It is thus of particular interest to determine on
which side of the countable structure benchmark
we can locate the automorphism groups of various
noncommutative versions of zero-dimensional spaces,
such as UHF algebras and the Jiang-Su algebra $\cZ$.

In this paper we show that
whenever $A$ is $\cZ$, $\cO_2$, $\cO_{\infty}$,
a UHF algebra of infinite type, or the tensor product
of a UHF algebra of infinite type and $\cO_{\infty}$,
then
the conjugation action $\Aut (A) \curvearrowright \Aut (A)$
is generically turbulent with respect
to the point-norm topology (Theorem~\ref{T-turbulence}).
We furthermore use this in the case of $\cZ$
to prove that for every separable \ca~$A$ satisfying
$\cZ \otimes A \cong A$
(a property referred to as {\emph{$\cZ$-stability}})
the relation of conjugacy on the set $\overline{\Inn (A)}$
of approximately inner automorphisms
is not classifiable by countable structures
(Theorem~\ref{T-JS stable}).
This class of \ca{s}
includes all of the simple nuclear \ca{s}
that fall under the scope
of the standard classification results
based on the Elliott invariant~\cite{EllTom08}.
We thus see here an illustration of how noncommutativity tends
to tilt the behaviour of a \ca{} more
in the direction of measure theory,
and not merely through the kind of ``zero-dimensionality''
that one frequently encounters in simple nuclear \ca{s}.
We also prove nonclassifiability by countable structures
for approximately inner
automorphisms of separable stable \ca{s}
(Theorem~\ref{T-stable})
and of separable ${\mathrm{II}}_1$ factors
which are McDuff or a free product of ${\mathrm{II}}_1$ factors
(Theorem~\ref{T-McDuff free}),
which includes the free group factors.

In~\cite{KerLiPic10}, the existence of a turbulent orbit
for the action $\Aut (R) \curvearrowright \Aut (R)$
was verified by a factor exchange argument
applied to the tensor product of
a dense sequence of automorphisms of $R$.
This factor exchange was accomplished by cutting into pieces
which are small in trace norm and then swapping
these pieces one by one to construct the required succession
of small steps in the definition of
turbulence.
In the point-norm setting of a separable \ca,
any such kind
of swapping is topologically too drastic an operation
if we are similarly aiming
to establish turbulence,
and so a different strategy is required.
The novelty in our approach is to apply the exchange argument
not to an arbitrary dense sequence of automorphisms
but to an infinite tensor power of the tensor product shift
automorphism of $A^{\otimes \Zb}$,
which allows us to
carry out the exchange via a continuous path of unitaries
in a way that commutes with the shift action.
This {\emph{malleability}} property of the tensor product shift
plays an important role in Popa's deformation-rigidity
theory~\cite{Pop07} but does not seem to have appeared
in the \ca{} context before.
It is the exact
commutativity of the factor exchange with the shift action
that turns out to be the key for verifying turbulence.
This should be compared with the kind
of approximate commutativity that ones finds in a result like
Lemma~2.1 of~\cite{DadWin09},
which does not seem to provide enough control for our purposes
(see Remark~\ref{R-unitary}).
Our use of the shift also relies on the density
of its conjugacy class in various situations, notably
in the case of the Jiang-Su algebra $\cZ$,
for which it is a consequence of recent work of Sato~\cite{Sat10}.

To establish the other part of our turbulence theorem,
namely that every orbit is meager,
we employ a result of Rosendal which provides
a criterion in terms of periodic approximation
for every conjugacy class in a Polish group
to be meager \cite[Proposition 18]{Ros09}
(see also page~9 of~\cite{Kec10}).
The Rokhlin lemma in ergodic theory may be seen as
a prototype for this kind of periodic approximation,
which we call the {\emph{Rosendal property}}
(Definition~\ref{D-rosendal}).
We relativize Rosendal's result in
Lemma~\ref{L-rosendal hom} so that we may use
the Rosendal property in conjunction with generic turbulence
to derive nonclassifiability by countable structures
within the broader classes of operator algebras described above.

Throughout the paper an undecorated $\otimes$
will denote the minimal C*-tensor product.
In fact, in all of our applications involving
separable \ca{s} at least one of the factors will
be nuclear,
and so there will be no ambiguity about the tensor product.
We take $\Nb = \{ 1, 2, \ldots \}$
(excluding~$0$).
If $A$ is a unital \ca,
we denote its identity by $1_A$
when $A$ must be explicitly specified.  

\medskip

\noindent{\emph{Acknowledgements.}}
D.~K.\  was partially supported by the US National Science Foundation
under Grant DMS-0900938.
M.~L.\  was supported by the York University
Elia Scholars Program.
N.~C.~P.\  was partially supported
by the US National Science Foundation
under Grant DMS-1101742
and by a visiting professorship at the Research Institute
for Mathematical Sciences at Kyoto University.
This work was initiated at the workshop on Set Theory
and C*-Algebras at the American Institute of Mathematics
in January 2012.
We would like to thank Samuel Coskey, George Elliott,
Ilijas Farah, Todor Tsankov, Stuart White, and Wilhelm Winter
for useful comments and suggestions.
We also thank Ken Dykema for suggesting that
the free product part of Theorem~\ref{T-McDuff free}
should hold on the basis of \cite{Dyk93} and~\cite{Dyk94}.

\section{Shift automorphisms and the existence of a dense turbulent
  orbit}\label{Sec_2}

The goal of this section is to establish
Lemma~\ref{L-dense turbulent point}, which
guarantees the existence of a dense turbulent orbit
in $\Aut (A)$ for various
strongly self-absorbing \ca{s}~$A$.
This forms one component of the proof
of Theorem~\ref{T-turbulence},
which will be completed in the next section.

Recall that a separable unital \ca{}
$A \not\cong \Cb$ is said to be
{\emph{strongly self-absorbing}} if there is
an isomorphism $A \otimes A\cong A$ which is approximately
unitarily equivalent to the first coordinate
embedding $a \mapsto a \otimes \unit$~\cite{TomWin07}.
This is a strong homogeneity property
of which one consequence is $A^{\otimes \Zb} \cong A$,
which enables us to exploit the tensor product shift.

\begin{notation}\label{N_3X24_Nbhd}
Let $A$ be a separable \ca.
For $\alpha \in \Aut (A)$,
a finite set $\Omega \subseteq A$,
and $\eps > 0$,
we write
\[
U_{\alpha, \Omega, \eps}
 = \big\{ \beta \in \Aut (A) \colon
  {\mbox{$\| \beta (a) - \alpha (a) \| < \eps$
     for all $a \in \Omega$}} \big\}.
\]
\end{notation}

These sets form a base for the point-norm topology on $\Aut (A)$,
under which $\Aut (A)$ is a Polish group.
(For some details, see Lemma 3.2 of~\cite{Phi12}.)
The action $\Aut (A) \curvearrowright \Aut (A)$
by conjugation is continuous.

\begin{notation}\label{N_3X31_TP}
Let $A$ be a unital nuclear \ca.
We let $A^{\otimes \Zb}$
be the infinite tensor product of copies of~$A$
indexed by~$\Zb$,
taken in the given order.
Formally,
$A^{\otimes \Zb}$ is the direct limit of the system
\[
A \longrightarrow A \otimes A \otimes A
  \longrightarrow A \otimes A \otimes A \otimes A \otimes A
  \longrightarrow \cdots
\]
under the maps $a \mapsto 1_A \otimes a \otimes 1_A$
at each stage.
A dense subalgebra is spanned by infinite elementary tensors
in which all but finitely many of the tensor factors are~$1_A$.
For $S \subseteq \Zb$,
we further write $A^{\otimes S}$
for the subalgebra of $A^{\otimes \Zb}$
obtained as the closed linear span of all
infinite elementary tensors as above
in which the tensor factors are $1_A$ for all indices not in~$S$.
For $m, n \in \Zb$ with $m \leq n$,
we take $A^{\otimes [m, n]} = A^{\otimes ([m, n] \cap \Zb)}$.
We use the analogous notation for other intervals,
and for tensor powers of automorphisms
as well as of algebras.
\end{notation}

\begin{lemma}\label{L-product}
Let $A$ be a strongly self-absorbing \ca.
Let $\gamma$ be an automorphism of $A^{\otimes \Nb}$,
let $\Omega$ be a finite subset of $A^{\otimes \Nb}$,
and let $\delta > 0$.
Then there are $q \in \Nb$ and
${\widetilde{\gamma}} \in \Aut (A^{\otimes [1, q]})$ such that,
with $\id$ being
the identity automorphism of $A^{\otimes [q + 1, \infty )}$,
we have
$\| ({\widetilde{\gamma}} \otimes \id) (a) - \gamma (a) \| < \delta$
for all $a \in \Omega$.
\end{lemma}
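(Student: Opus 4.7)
The plan is to build $\widetilde{\gamma}$ as an inner automorphism $\Ad (v)$ of $A^{\otimes [1, q]}$, where $v$ is obtained by first finding a unitary in $A^{\otimes \Nb}$ that implements $\gamma$ approximately on $\Omega$ and then cutting it down to a finite-level subalgebra. First I would reduce to the case $\Omega \subseteq A^{\otimes [1, N]}$ for some $N \in \Nb$: the union $\bigcup_{N} A^{\otimes [1, N]}$ is norm-dense in $A^{\otimes \Nb}$, so replacing each element of $\Omega$ by a close approximant supported on the first $N$ tensor factors propagates only a small error through the final estimate, $\gamma$ and any $\widetilde{\gamma} \otimes \id$ being isometries.

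With $\Omega \subseteq A^{\otimes [1, N]}$ fixed, the key step is to compare the two unital $*$-homomorphisms $\iota, \, \gamma|_{A^{\otimes [1, N]}} \Colon A^{\otimes [1, N]} \to A^{\otimes \Nb}$, where $\iota$ is the canonical inclusion (tensor with $1$ on indices exceeding $N$). Because $A$ is strongly self-absorbing, one has $A^{\otimes [1, N]} \cong A^{\otimes N} \cong A$ and $A^{\otimes \Nb} \cong A$, so both source and target are themselves strongly self-absorbing copies of $A$. The Toms--Winter approximate uniqueness theorem---any two unital $*$-homomorphisms between copies of a strongly self-absorbing \ca\ are approximately unitarily equivalent---then produces a unitary $u \in A^{\otimes \Nb}$ with $\| u \iota (b) u^{*} - \gamma (b) \| < \delta / 2$ for every $b \in \Omega$.

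Finally I would cut $u$ down to finitely many factors. Approximate $u$ in norm by some $w \in A^{\otimes [1, q]}$ for $q \geq N$; for $\| w - u \|$ small enough both $w^{*} w$ and $w w^{*}$ become invertible in $A^{\otimes [1, q]}$, and polar decomposition $w = v \lvert w \rvert$ then produces a unitary $v \in A^{\otimes [1, q]}$ with $\| v - u \|$ as small as desired. Setting $\widetilde{\gamma} = \Ad (v) \in \Aut (A^{\otimes [1, q]})$ and noting that $(\widetilde{\gamma} \otimes \id)(b) = v b v^{*}$ when $b \in \Omega$ is viewed inside $A^{\otimes \Nb}$, the triangle inequality $\| v b v^{*} - \gamma (b) \| \leq 2 \| v - u \| \| b \| + \delta / 2$ yields the claim once $\| v - u \|$ is taken sufficiently small. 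The main obstacle is really the middle step: recognising that both source and target of the two compared $*$-homomorphisms can be identified with the strongly self-absorbing algebra $A$ (in particular that $A^{\otimes \Nb} \cong A$, which is a standard consequence of strong self-absorption), after which the Toms--Winter uniqueness theorem does essentially all the work and the passage from $u \in A^{\otimes \Nb}$ to $v \in A^{\otimes [1, q]}$ is a routine polar decomposition argument.
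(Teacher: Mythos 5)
Your proof is correct, but it follows a genuinely different route from the paper's. The paper never makes $\widetilde{\gamma}$ inner: it chooses $q$ so that every element of $\Omega \cup \gamma(\Omega)$ is within $\delta/6$ of $A^{\otimes [1,q]} \otimes \unit$, uses strong self-absorption (corrected by an inner automorphism) to produce an isomorphism $\omega \Colon A^{\otimes [1,q]} \to A^{\otimes \Nb}$ that moves the relevant approximants by less than $\delta/6$, and sets $\widetilde{\gamma} = \omega^{-1} \circ \gamma \circ \omega$, so that $\widetilde{\gamma}$ is a conjugate of $\gamma$ itself. You instead replace $\gamma$ by an inner automorphism, invoking the approximate unitary equivalence of any two unital $*$-homomorphisms from a strongly self-absorbing algebra into an algebra that tensorially absorbs it (applied to the inclusion of $A^{\otimes [1,N]} \cong A$ and to the restriction of $\gamma$ to that block), and then truncating the implementing unitary into $A^{\otimes [1,q]}$ by a polar-decomposition perturbation. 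All the steps check out: $A^{\otimes \Nb}$ is $A$-absorbing, the reduction of $\Omega$ to a finite block costs only a small perturbation, and since $v \in A^{\otimes [1,q]}$ one has $(\Ad(v) \otimes \id)(a) = v a v^{*}$ for every $a \in A^{\otimes \Nb}$, so the final triangle inequality closes. Your version yields the formally stronger conclusion that $\widetilde{\gamma}$ may be taken inner---in effect you are reproving, on a finite set, the approximate innerness of automorphisms of strongly self-absorbing \ca{s} (Proposition~1.13 of \cite{TomWin07}), and quoting that result directly would even let you skip the reduction to $\Omega \subseteq A^{\otimes [1,N]}$ altogether. The extra strength is not needed where the lemma is applied (Lemma~\ref{L-dense turbulent point} uses only the approximation property), and the paper's argument has the mild advantage of relying on nothing beyond the defining approximate unitary equivalence of strong self-absorption, whereas the uniqueness theorem you cite is itself derived from that property by an intertwining argument.
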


\begin{proof}
Take $q \in \Nb$ large enough that,
with $\unit$ being the identity of $A^{\otimes [q + 1, \infty )}$,
for every $a \in \Omega \cup \gamma ( \Omega )$ there is
$a^{\flat} \in A^{\otimes [1, q]}$ such that
$\| a - a^{\flat} \otimes \unit \| < \delta / 6$.

Since $A$ is strongly self-absorbing, there is an isomorphism
$\theta \Colon A^{\otimes [1, q]} \to A^{\otimes \Nb}$
which is approximately unitarily equivalent to the embedding
$A^{\otimes [1, q]} \hookrightarrow A^{\otimes [1, q]}
   \otimes A^{\otimes [q + 1, \infty )}
   = A^{\otimes \Nb}$
given by $a\mapsto a \otimes \unit$.
Thus by composing $\theta$ with a suitable inner automorphism
of $A^{\otimes \Nb}$ we can construct an isomorphism
$\omega \Colon A^{\otimes [1, q]} \to A^{\otimes \Nb}$
such that
$\| \omega (a^{\flat}) - a^{\flat} \otimes \unit \| < \delta /6$
for all $a \in \Omega \cup \gamma (\Omega )$.
Set
${\widetilde{\gamma}}
   = \omega^{- 1} \circ \gamma \circ \omega
   \in \Aut (A^{\otimes [1, q]} )$.
Then for every
$a \in \Omega$ we have
\begin{align*}
\| {\widetilde{\gamma}} (a^{\flat}) - \gamma (a)^{\flat} \|
&\leq \| (\omega^{- 1} \circ \gamma )
  ( \omega (a^{\flat}) - a^{\flat} \otimes \unit ) \|
+ \| (\omega^{- 1} \circ \gamma ) (a^{\flat} \otimes \unit - a) \|
 \\
&\hspace*{15mm} \ + \| \omega^{- 1} (\gamma (a)
   - \gamma (a)^{\flat} \otimes \unit ) \|
+ \| \omega^{- 1} (\gamma (a)^{\flat} \otimes \unit )
   - \gamma (a)^{\flat} \|
 \\
&< \frac{\delta}{6} + \frac{\delta}{6}
   + \frac{\delta}{6} + \frac{\delta}{6}
 = \frac{2\delta}{3}
\end{align*}
and so
\begin{align*}
\| ({\widetilde{\gamma}} \otimes \id) (a) - \gamma (a) \|
&\leq \| ({\widetilde{\gamma}} \otimes \id) (a - a^{\flat} \otimes \unit ) \|
+ \| ({\widetilde{\gamma}} (a^{\flat} ) - \gamma (a)^{\flat} ) \otimes \unit \|
 \\
&\hspace*{20mm} \ + \| \gamma (a)^{\flat} \otimes \unit - \gamma (a) \|
 \\
&< \frac{\delta}{6} + \frac{2\delta}{3} + \frac{\delta}{6} = \delta,
\end{align*}
as desired.
\end{proof}

\begin{lemma}\label{L_shift_dense}
Let $A$ be $\cZ$, $\cO_2$, $\cO_{\infty}$,
a UHF algebra, or the tensor product of
a UHF algebra and $\cO_{\infty}$.
Then the tensor product shift automorphism $\beta$
of $A^{\otimes \Zb}$ has dense conjugacy class
in $\Aut (A^{\otimes \Zb} )$.
\end{lemma}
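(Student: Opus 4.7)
The goal is to show that every basic point-norm neighborhood $U_{\alpha,\Omega,\eps}$ of every $\alpha\in\Aut(A^{\otimes\Zb})$ contains some conjugate $\sigma\beta\sigma^{-1}$. Strong self-absorption of $A$ provides a fixed isomorphism $A^{\otimes\Zb}\cong A$ that allows both $\beta$ and $\alpha$ to be analyzed inside $\Aut(A)$ using the classification machinery available for $A$.

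The first step is a reduction to a finite window: by a two-sided analogue of Lemma~\ref{L-product}, given $\alpha$, $\Omega$, $\eps$ one can find $N\in\Nb$ and $\widetilde\alpha\in\Aut(A^{\otimes[-N,N]})$ with $\|(\widetilde\alpha\otimes\id)(a)-\alpha(a)\|<\eps/2$ for all $a\in\Omega$. Since $A^{\otimes[-N,N]}\cong A$, the problem now becomes to realize an essentially arbitrary automorphism of $A$ as an approximate $\Aut(A)$-conjugate of $\beta$ (viewed as an automorphism of $A$ via the identification $A^{\otimes\Zb}\cong A$).

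The heart of the argument is this last statement, which I would verify case by case. For $A=\cZ$ it is a consequence of Sato~\cite{Sat10}: his absorption/flexibility results for automorphisms of $\cZ$ yield approximate conjugacy of any automorphism to the shift. For $A=\cO_2$, the Kirchberg--Phillips theorem forces all unital $*$-endomorphisms to be approximately unitarily equivalent (since $KK(\cO_2,\cO_2)=0$), and in particular so are $\beta$ and $\widetilde\alpha$. For $A=\cO_\infty$, any unital automorphism preserves $[1_A]\in K_0(\cO_\infty)=\Zb$ and hence has trivial $KK$-class, so Kirchberg--Phillips again supplies the required approximate unitary equivalence with $\beta$. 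For UHF $A$, the shift is approximately inner by a direct finite-window argument: on each subalgebra $A^{\otimes[-M,M]}$, the restriction of $\beta$ is a $*$-isomorphism between full matrix subalgebras of $A^{\otimes[-M,M+1]}$ and is therefore inner; combined with the Elliott classification of AF automorphisms (which makes every unital automorphism of a UHF algebra approximately inner, since $[1]$ generates $K_0$ as an ordered group) and the Rohlin property of $\beta$, this yields the required approximate conjugacy. The $B\otimes\cO_\infty$ case (with $B$ UHF) follows by combining these ingredients through Kirchberg--Phillips.

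Upgrading from approximate unitary equivalence ($\Ad(u_n)\circ\beta\to\widetilde\alpha$) to approximate $\Aut$-conjugacy ($\sigma_n\beta\sigma_n^{-1}\to\widetilde\alpha$) would be carried out through an Elliott-style intertwining, alternately adjusting by inner automorphisms on both sides using the abundance of unitaries in each algebra. I expect the main obstacle to be the invocation of Sato's theorem in the $\cZ$ case, where the central-sequence technology required is substantial; the $\cO_2$ and $\cO_\infty$ cases follow essentially immediately from Kirchberg--Phillips once the $KK$-bookkeeping is done, and the UHF arguments are classical but need some care to produce $\Aut$-conjugacy rather than merely approximate unitary equivalence.
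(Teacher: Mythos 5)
Your proposal correctly isolates one of the two ingredients: approximate innerness of automorphisms of these algebras, which gives that $\{\Ad (u) \circ \beta \colon u \in U (A^{\otimes \Zb}) \}$ is dense in $\Aut (A^{\otimes \Zb})$. But the lemma asks for density of the \emph{conjugacy class} $\{\sigma \circ \beta \circ \sigma^{-1}\}$, and the passage from the first set to the second is the real content. Conjugating $\beta$ by an inner automorphism gives
\[
\Ad (v) \circ \beta \circ \Ad (v)^{-1} = \Ad (v \beta (v^*)) \circ \beta,
\]
so what you need is that every unitary $u$ is approximately of the form $v \beta (v^*)$ up to a phase; this is the \emph{stability} (approximate vanishing of $1$-cohomology) of $\beta$, which the paper derives from the (weak) Rokhlin property of the shift in each case (Sato for $\cZ$; Nakamura and Izumi--Matui for the $\cO_2$, $\cO_{\infty}$ and UHF${}\otimes\cO_{\infty}$ cases; Bratteli--Kishimoto--R{\o}rdam--St{\o}rmer, Kishimoto and Herman--Ocneanu for UHF). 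Your proposal never states this coboundary approximation, and the tool you offer in its place --- an ``Elliott-style intertwining, alternately adjusting by inner automorphisms on both sides'' --- does not do the job: such an intertwining needs an asymptotically commuting diagram, which is far more than approximate unitary equivalence of $\beta$ and $\widetilde{\alpha}$, and the versions that can actually be run for automorphisms (Evans--Kishimoto type arguments, again powered by the Rokhlin property) yield cocycle conjugacy rather than conjugacy. Approximate unitary equivalence of two automorphisms is in general vastly weaker than approximate conjugacy, so the $KK$/classification bookkeeping you carry out case by case, while essentially correct, only re-proves approximate innerness and leaves the decisive step open. You do mention the Rohlin property in the UHF case, but without naming the stability statement it feeds into, the argument does not close.

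A secondary remark: the initial reduction to a finite window $A^{\otimes [-N, N]}$ is harmless but unnecessary. Once approximate innerness gives $\| \alpha (a) - u \beta (a) u^* \| < \eps / 3$ for $a \in \Omega$, stability applied to $u$ produces $v$ with $\lambda u \approx v \beta (v^*)$, and a three-line norm estimate shows $\Ad (v) \circ \beta \circ \Ad (v)^{-1} \in U_{\alpha, \Omega, \eps}$; no intertwining and no further case-by-case $KK$ computation is needed beyond quoting approximate innerness and the Rokhlin property.
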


\begin{proof}
Consider first the case $A = \cZ$.
Let $\alpha$ be an automorphism of $\cZ$,
let $\Omega$ be a finite subset of $\cZ$,
and let $\eps > 0$.
Set
$M = 1 + \sup ( \{ \| a \| \Colon a \in \Omega \} )$.
As every automorphism of $\cZ$ is approximately inner
(Theorem~7.6 of~\cite{JiaSu99}),
there is a unitary $u \in \cZ$ such that
$\| \alpha (a) - u \beta (a) u^* \| < \eps / 3$
for all $a \in \Omega$.
Proposition~4.4 of~\cite{Sat10} implies that
$\beta$ has the weak Rokhlin property, and so
by Corollary~5.6 of~\cite{Sat10}
(or more precisely the simpler version
omitting the quantification of finite subsets,
which follows from the proof)
there are a unitary $v\in \cZ$
and $\lambda \in \Tb$ such that
$\| \lambda u - v \beta (v^* ) \| < \eps / (3 M)$ (stability).
Then for all $a \in \Omega$ we have
\begin{align*}
\big\| \alpha (a)
   - \big( \Ad (v) \circ \beta \circ \Ad (v)^{- 1} \big) (a) \big\|
& = \big\| \alpha (a) - v \beta (v^* ) \beta (a) \beta (v) v^* \big\|
 \\
& \leq \| \alpha (a) - u \beta (a)u^* \|
+ \| (\lambda u - v \beta (v^* )) \| \cdot \| \beta (a) \|
   \cdot \big\| {\overline{\lambda}} u^* \big\|
 \\
& \hspace*{15mm} {\mbox{}} + \| v \beta (v^* ) \| \cdot \| \beta (a) \|
   \cdot \| (\lambda u - v \beta (v^* ))^* \|
    \\
& < \frac{\eps}{3} + \left( \frac{\eps}{3 M} \right) M
 + M \left( \frac{\eps}{3 M} \right)
  \leq \eps.
\end{align*}
Thus $\beta$ has dense conjugacy class
in $\Aut (A^{\otimes \Zb} )$.

For $\cO_2$, $\cO_{\infty}$, a UHF algebra,
or the tensor product of a UHF algebra and $\cO_{\infty}$,
we can proceed using a similar argument.
Automorphisms of these \ca{s} are well known
to be approximately inner.
(See for example Proposition 1.13 of~\cite{TomWin07},
which shows this for every strongly self-absorbing \ca.)
In the case of $\cO_2$, $\cO_{\infty}$, or the tensor product of
a UHF algebra and $\cO_{\infty}$,
$\beta$ has the Rokhlin property by Theorem~1 of~\cite{Nak00}
and thus satisfies stability by Lemma~7.2 of~\cite{IzuMat10}.
In the case of a UHF algebra,
the unital one sided tensor shift endomorphism
is shown to have the Rokhlin property
in \cite[Section 4]{BraKisRorSto93} and \cite[Theorem~2.1]{Kis96}.
The Rokhlin property for the two sided tensor shift $\beta$
follows by tensoring with $1$ in front.
So $\beta$ satisfies stability by Theorem~1 of~\cite{HerOcn83}.
\end{proof}

\begin{definition}
An automorphism $\alpha$ of a \ca~$A$
is said to be {\emph{malleable}}
if there is a point-norm continuous path
$( \rho_t )_{t \in [0, 1]}$ in $\Aut (A \otimes A)$ such that
$\rho_0$ is the identity, $\rho_1$ is the tensor product flip,
and
$\rho_t \circ (\alpha \otimes \alpha)
 = (\alpha \otimes \alpha) \circ \rho_t$
for all $t \in [0, 1]$.
\end{definition}

\begin{lemma}\label{L-malleable}
Let $A$ be a strongly self-absorbing \ca{} and let
$\alpha$ the tensor product shift automorphism of $A^{\otimes \Zb}$.
Then $\alpha$ is malleable.
\end{lemma}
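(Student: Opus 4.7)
The plan is to reduce malleability of $\alpha$ to the existence of a point-norm continuous path from $\id$ to $\flip_0$ in $\Aut(A\otimes A)$, where $\flip_0$ denotes the flip on $A\otimes A$. Identify $A^{\otimes\Zb}\otimes A^{\otimes\Zb}$ with $(A\otimes A)^{\otimes\Zb}$ in the canonical way, so that $\alpha\otimes\alpha$ becomes the shift on $(A\otimes A)^{\otimes\Zb}$ and the tensor flip of $A^{\otimes\Zb}\otimes A^{\otimes\Zb}$ becomes the coordinatewise flip $\flip_0^{\otimes\Zb}$. Given a point-norm continuous path $(\eta_t)_{t\in[0,1]}$ in $\Aut(A\otimes A)$ with $\eta_0=\id$ and $\eta_1=\flip_0$, set $\rho_t:=\eta_t^{\otimes\Zb}$, defined on $(A\otimes A)^{\otimes\Zb}$ by applying $\eta_t$ to each tensor coordinate. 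Then $\rho_t$ commutes with the shift because the same automorphism acts at every coordinate, and point-norm continuity of $t\mapsto\rho_t$ reduces, via a finite-support density argument on the subalgebra of elementary tensors with finitely many non-$\unit$ factors, to the continuity of $t\mapsto\eta_t$.

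To produce $\eta_t$, I would exploit the strong self-absorption of $A$. The existence of an isomorphism $A\cong A\otimes A$ approximately unitarily equivalent to $a\mapsto a\otimes\unit$ forces the flip $\flip_0$ on $A\otimes A$ to be approximately inner; fix unitaries $u_n\in A\otimes A$ with $\Ad(u_n)\to\flip_0$ in the point-norm topology along an increasing exhausting sequence $F_n$ of finite subsets of $A\otimes A$, at geometric rate. The unitary groups of the strongly self-absorbing \ca{s} arising in our applications are path-connected, so one may connect $u_n$ to $u_{n+1}$ by a continuous path of unitaries with the property that the induced conjugation oscillates by at most $2^{-n}$ on $F_n$. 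Concatenating these paths on the dyadic subintervals $[1-2^{-n},\,1-2^{-(n+1)}]$ of $[0,1]$, starting from $u_0:=\unit$ so that the path starts at $\id$, gives a point-norm continuous path on $[0,1)$; this extends continuously at $t=1$ by setting $\eta_1:=\flip_0$, since on any fixed element of the dense set $\bigcup_n F_n$ both the approximations $\Ad(u_n)(\cdot)$ and the in-interval oscillations approach $\flip_0(\cdot)$ as $t\to 1^-$.

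The main obstacle is the bookkeeping in the preceding construction: the approximation errors $\|\Ad(u_n)(a)-\flip_0(a)\|$ for $a\in F_n$ and the diameters of the interpolating unitary paths must be simultaneously controlled so that the extension to $t=1$ is genuinely point-norm continuous and lands on $\flip_0$ rather than on some other element of the closure of $\Inn(A\otimes A)$. Once $\eta_t$ is in hand the proof is complete, as $\rho_t=\eta_t^{\otimes\Zb}$ satisfies the three required properties by inspection.
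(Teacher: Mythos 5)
Your global strategy coincides with the paper's: identify $A^{\otimes\Zb}\otimes A^{\otimes\Zb}$ with $(A\otimes A)^{\otimes\Zb}$ by pairing like indices, take a point-norm continuous path $(\eta_t)_{t\in[0,1]}$ in $\Aut(A\otimes A)$ from the identity to the flip, and set $\rho_t=\eta_t^{\otimes\Zb}$; the commutation with $\alpha\otimes\alpha$ and the point-norm continuity of the resulting path are then exactly as you describe. The gap is in your construction of $(\eta_t)$. Having chosen unitaries $u_n$ with $\Ad(u_n)\to\flip$ point-norm, you assert that path-connectedness of the unitary group lets you join $u_n$ to $u_{n+1}$ by a continuous path of unitaries whose induced conjugations oscillate by at most $2^{-n}$ on $F_n$. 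Path-connectedness gives no such control: the relevant unitary $w=u_{n+1}^*u_n$ approximately commutes with $F_n$, and what you need is that $w$ can be joined to $\unit$ through unitaries that \emph{still} approximately commute with $F_n$, with the output tolerance controlled by the input tolerance. That is a genuinely nontrivial statement --- it is Lemma~2.1 of \cite{DadWin09}, whose proof uses $K_1$-injectivity (available here since strongly self-absorbing algebras are $\cZ$-stable by \cite{Win11}) --- and it cannot be filed under ``bookkeeping'': in a general unital \ca{} with path-connected unitary group an almost central unitary need not be joinable to $\unit$ through almost central unitaries.

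The paper sidesteps your second and third paragraphs entirely by quoting Theorem~2.2 of \cite{DadWin09}: for $D$ strongly self-absorbing the flip on $D\otimes D$ is \emph{strongly asymptotically inner}, i.e.\ there is a norm-continuous path $(u_t)_{t\in[0,1)}$ of unitaries in $D\otimes D$ with $u_0=\unit$ and $\Ad(u_t)(a)\to\flip(a)$ for all $a$ as $t\to 1^-$; one then sets $\rho_t=\Ad(u_t)^{\otimes\Zb}$ for $t<1$ and $\rho_1=\flip^{\otimes\Zb}$. If you wish to keep your hand-built concatenation, replace the appeal to path-connectedness by an appeal to Lemma~2.1 of \cite{DadWin09}; with that lemma in place your telescoping estimate for continuity at $t=1$ does go through, and you will have essentially reproduced the proof of their Theorem~2.2.
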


\begin{proof}
Let $\flip$ be the tensor product flip automorphism of $A \otimes A$.
Since $A$ is strongly self-absorbing we have $A \otimes A \cong A$,
and so
by Theorem~2.2 of~\cite{DadWin09} we can find
a norm-continuous path $( u_t )_{t \in [0, 1)}$
of unitaries in $A \otimes A$ such that $u_0 = 1_{A \otimes A}$ and
$\lim_{t \to 1^-} \| u_t a u_t^* - \flip (a) \| = 0$
for all $a \in A \otimes A$.

Define a path $( \rho_t )_{t \in [0, 1]}$
in $\Aut \big( (A \otimes A)^{\otimes \Zb} \big)$ by setting
$\rho_t = \Ad (u_t )^{\otimes \Zb}$
for every $t \in [0, 1)$ and $\rho_1 = \flip^{\otimes \Zb}$.
Then $\rho_0$ is the identity.
A simple approximation argument shows that
this path is point-norm continuous.
Moreover, by viewing  $(A \otimes A)^{\otimes \Zb}$
as $(A^{\otimes \Zb} ) \otimes (A^{\otimes \Zb} )$ via the
identification that pairs like indices,
we see that $\rho_1$ is the flip automorphism
and
$\rho_t \circ (\alpha \otimes \alpha)
 = (\alpha \otimes \alpha) \circ \rho_t$
for all $t \in [0, 1]$.
Thus $\alpha$ is malleable.
\end{proof}

\begin{lemma}\label{L-dense turbulent point}
Let $A$ be $\cZ$, $\cO_2$, $\cO_{\infty}$,
a UHF algebra of infinite type,
or a tensor product of a UHF algebra
of infinite type and $\cO_{\infty}$.
Then there exists a dense turbulent orbit
(Definition~\ref{D_3X31_Turb})
for the action of $\Aut (A)$ on itself by conjugation.
\end{lemma}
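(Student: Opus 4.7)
The plan is to identify $A$ with $(A^{\otimes \Zb})^{\otimes \Nb}$ via strong self-absorption, set $B := A^{\otimes \Zb}$ with $\beta$ its tensor product shift, and exhibit $\sigma := \beta^{\otimes \Nb} \in \Aut(A)$ as a dense turbulent point. Density of the conjugacy class of $\sigma$ follows from Lemma~\ref{L_shift_dense}: via the bijection $\Zb \times \Nb \cong \Nb \times \Zb$, the automorphism $\sigma$ becomes the tensor product shift on $(A^{\otimes \Nb})^{\otimes \Zb}$, and since $A^{\otimes \Nb} \cong A$ sits in the same list of strongly self-absorbing algebras as $A$, the lemma applies.

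For turbulence, fix basic open neighborhoods $U \ni \sigma$ and $V \ni \id$ determined by a finite test set $\Omega \subseteq A$. Applied to the strongly self-absorbing algebra $B$ and to $\alpha \circ \sigma^{-1}$ for any target $\alpha$ close to $\sigma$ on $\Omega$, Lemma~\ref{L-product} produces $q \in \Nb$ and $\hat\gamma \in \Aut(B^{\otimes [1, q]})$ point-norm close to $\beta^{\otimes [1, q]}$ with $\alpha \approx \hat\gamma \otimes \beta^{\otimes \Nb \setminus [1, q]}$ on $\Omega$. It thus suffices to place every such $\hat\gamma \otimes \beta^{\otimes \Nb \setminus [1, q]}$ into $\cO(\sigma, U, V)$, because the closure of the local orbit will then contain a whole neighborhood of $\sigma$.

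The key step is to carry out the malleable factor exchange at the block level. Set $B_q := B^{\otimes [1, q]}$ and $\beta_q := \beta^{\otimes [1, q]}$, so that $A \cong B_q^{\otimes \Nb}$ and $\sigma = \beta_q^{\otimes \Nb}$. Regrouping the tensor indices identifies $B_q$ with $(A^{\otimes [1, q]})^{\otimes \Zb}$, under which $\beta_q$ becomes the tensor product shift of the strongly self-absorbing algebra $A^{\otimes [1, q]}$. Lemma~\ref{L-malleable} therefore supplies a point-norm continuous path $\rho_t \in \Aut(B_q \otimes B_q)$ from $\id$ to the flip commuting with $\beta_q \otimes \beta_q$; applied to blocks $1$ and $n$ of $A = B_q^{\otimes \Nb}$ this yields $\rho_t^{(1,n)} \in \Aut(A)$ commuting exactly with $\sigma$. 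Lemma~\ref{L_shift_dense} applied to $A^{\otimes [1, q]}$ lets us approximate $\hat\gamma$ by a conjugate $\tau \beta_q \tau^{-1}$ for some $\tau \in \Aut(B_q)$. Acting by $\tau$ on block $n$ alone gives $\tau^{(n)} \in V$ for $n$ large, and $\sigma' := \tau^{(n)} \sigma (\tau^{(n)})^{-1} \in U$ differs from $\sigma$ only at block $n$. The path $\phi_t := \rho_t^{(1,n)} \sigma' (\rho_t^{(1,n)})^{-1}$ runs from $\sigma'$ to $\hat\gamma \otimes \beta_q^{\otimes \Nb \setminus \{1\}}$, and the exact commutativity gives
\[
\|\phi_t(a) - \sigma(a)\|
= \|(\sigma' - \sigma)((\rho_t^{(1,n)})^{-1}(a))\|,
\]
which is uniformly small in $t$ because $\sigma' - \sigma$ is supported at block $n$ and $\tau$ is point-norm close to $\id_{B_q}$ on the relevant finite set. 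A fine discretization of $t \in [0, 1]$ then yields the required chain of small steps in $V$, with every intermediate conjugate in $U$.

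The main obstacle is harmonizing the parameters: the block size $q$ is forced by Lemma~\ref{L-product} and the test set $\Omega$; the helper block index $n$ must be large enough that the block-$n$ perturbation is invisible on $\Omega$ and that $\tau^{(n)} \in V$; and $\tau$ must be calibrated so that the bound on $\phi_t - \sigma$ holds uniformly along the entire malleable path. It is the \emph{exact} commutativity of $\rho_t^{(1,n)}$ with $\sigma$, rather than an approximate version, that makes this uniform bound sharp enough to survive passage to the point-norm topology.
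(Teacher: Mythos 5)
Your strategy is essentially the paper's: the same base point $\sigma=\beta^{\otimes \Nb}$, density via the coordinate shuffle and Lemma~\ref{L_shift_dense}, and a malleable factor exchange commuting \emph{exactly} with $\sigma$, discretized into small steps. The density half is fine, and the identity $\|\phi_t(a)-\sigma(a)\|=\|(\sigma'-\sigma)((\rho_t^{(1,n)})^{-1}(a))\|$ is correct. The gap is in why this is uniformly small. You attribute it to ``$\tau$ is point-norm close to $\id_{B_q}$ on the relevant finite set,'' but $\tau$ must conjugate $\beta_q$ approximately onto $\hat\gamma$, and $\hat\gamma$ is an essentially arbitrary automorphism of $B_q$ (constrained only on the finite set coming from the target neighbourhood); no $\tau$ near the identity can do that. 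What the estimate actually requires is that $\tau\beta_q\tau^{-1}-\beta_q$ (equivalently $\hat\gamma-\beta_q$, equivalently the target minus $\sigma$) be small on the block-$n$ legs of $(\rho_t^{(1,n)})^{-1}(a)$ for all $t\in[0,1]$ and $a\in\Omega$. By compactness and continuity of the path these legs are, up to a small error, a fixed finite set $E_0$ in the unit ball with bounded coefficients, so the condition is that the target lie in a neighbourhood $U_{\sigma,E,\eps'}$ determined by the path. Hence you cannot put ``every $\hat\gamma\otimes\beta^{\otimes\Nb\setminus[1,q]}$ with target close to $\sigma$ on $\Omega$'' into the local orbit; you must first fix the path, extract $E$ and $\eps'$, and then prove only that $\overline{\cO(\sigma,U,V)}\supseteq U_{\sigma,E,\eps'}$, which still has nonempty interior and is exactly what the paper does.

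There is a second, related ordering problem: your malleability path lives on $B_q\otimes B_q$, and $q$ is produced by Lemma~\ref{L-product} from the target, so the finite set $E_0$ would depend on the very target it is supposed to constrain. The paper breaks this circle by running the commuting path $\rho_t$ only on the factor $A^{\otimes[1,m]}$ that approximately supports $\Omega$ (so $E_0$ depends only on $U$ and $V$), and by using an arbitrary, non-commuting path $\sigma_t$ from the identity to the flip on the remaining factor $A^{\otimes[m+1,\,q]}$, where the test elements are scalar and the path is therefore invisible. In your block formulation the same effect can be obtained by choosing the path on $B_q\otimes B_q$ as a tensor power of paths on $B\otimes B$, so that it fixes the $1_{[m+1,\,q]}$ legs of the test elements; but this choice, and the resulting reordering of quantifiers, is precisely the content that your write-up flags as ``harmonizing the parameters'' without actually supplying it.
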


\begin{proof}
We follow Notation~\ref{N_3X31_TP} throughout.
Also, in this proof,
for any interval $S$
we let $\id_S \in \Aut (A^{\otimes S})$
be the identity automorphism
and let $\unit_S \in A^{\otimes S}$ be the identity of the algebra.

Note that $A^{\otimes \Zb} \cong A$,
as all of the above \ca{s} are strongly self-absorbing.
Thus there is an automorphism $\beta$ of $A$ which is conjugate
to the tensor shift automorphism of $A^{\otimes \Zb}$.
It follows from Lemma~\ref{L-malleable}
that $\beta$ is malleable.
Set $\alpha = \beta^{\otimes \Nb} \in \Aut (A^{\otimes \Nb})$.
By a tensor product coordinate shuffle
we can view $\alpha$ as the shift automorphism of
$(A^{\otimes \Nb} )^{\otimes \Zb}$,
and since $A^{\otimes \Nb} \cong A$ it follows that
$\alpha$ is conjugate to $\beta$.
By Lemma~\ref{L_shift_dense} we deduce that $\alpha$ has
dense conjugacy class in $\Aut (A^{\otimes \Nb})$.
Thus to establish the lemma
it suffices to show, given a neighbourhood $U$
of $\alpha$ in $\Aut (A^{\otimes \Nb})$
and a neighbourhood $V$ of the identity automorphism
$\id_{\Nb}$ in $\Aut (A^{\otimes \Nb})$,
that the closure of the local orbit
$\cO (\alpha, U, V)$
(Definition~\ref{D_3X24_LocOrb}) has nonempty interior.

By a straightforward approximation argument,
there exist $m \in \Nb$,
$\eps > 0$,
and a finite set $\Omega_0$ in the unit ball of $A^{\otimes [1, m]}$
such that,
if we
set
\[
\Omega
 = \big\{ a \otimes \unit_{[m + 1, \, \infty)}
      \Colon a \in \Omega_0 \big\}
 \subseteq A^{\otimes \Nb},
\]
then (using Notation~\ref{N_3X24_Nbhd})
we have $U_{\alpha, \Omega, \eps} \subseteq U$
and $U_{\id_{\Nb}, \Omega, \eps} \subseteq V$.

Since $\beta$ is malleable so is $\beta^{\otimes [1, m]}$,
for we can rewrite
$A^{\otimes [1, m]} \otimes A^{\otimes [1, m]}$
as $(A \otimes A)^{\otimes [1, m]}$ by pairing
like indices and then take the $m$-fold tensor power
of a path in $\Aut (A \otimes A)$
witnessing the malleability of $\beta$.
Thus there is a point-norm continuous path $(\rho_t )_{t \in [0, 1]}$
in $A^{\otimes [1, m]} \otimes A^{\otimes [1, m]}$
such that $\rho_0$ is the identity automorphism,
$\rho_1$ is the tensor product flip automorphism,
and
\begin{equation}\label{Eq_3Y01_Flip}
\big( \beta^{\otimes [1, m]} \otimes \beta^{\otimes [1, m]} \big)
      \circ \rho_t
 = \rho_t \circ
   \big( \beta^{\otimes [1, m]} \otimes \beta^{\otimes [1, m]} \big)
\end{equation}
for all $t \in [0, 1]$.
By point-norm continuity we can find
a finite set $F \subseteq A^{\otimes [1, m]} \otimes A^{\otimes [1, m]}$
which is $\eps / 6$-dense
in
\[
\big\{ \rho_t (a \otimes \unit_{[1, m]} ) \colon
 {\mbox{$a \in \Omega_0$ and $t \in [0, 1]$}} \big\}.
\]
Now choose a finite subset $E_0$
of the unit ball of $A^{\otimes [1, m]}$
such that for every $b \in F$
there are $\lambda_{x, y, b} \in \Cb$ for $x, y \in E_0$
with
\[
\bigg\| b - \sum_{x, y \in E_0}
   \lambda_{x, y, b} \, x \otimes y \bigg\| < \frac{\eps}{6}.
\]
Taking
\[
M = \sup \big( \big\{ | \lambda_{x, y, b} | \colon
  {\mbox{$x, y \in E_0$ and $b \in F$}} \big\} \big),
\]
for every $t \in [0, 1]$
and $a \in \Omega_0$
we find scalars $\lambda_{x, y, t, a} \in \Cb$
with $| \lambda_{x, y, t, a} | \leq M$ for $x, y \in E_0$
such that
\[
\bigg\| \rho_t (a \otimes \unit_{[1, m]} ) - \sum_{x, y \in E_0}
   \lambda_{x, y, t, a} \, x \otimes y \bigg\| < \frac{\eps}{3}.
\]

Set
\[
\eps' = \frac{\eps}{9 (M + 1) \card (E_0)^2}
\quad
{\mbox{and}}
\quad
E = \big\{ a \otimes \unit_{[m + 1, \, \infty)}
      \Colon a \in E_0 \big\}
 \subseteq A^{\otimes \Nb}.
\]
Let $W \subseteq U_{\alpha, \, E, \, \eps'}$
be a nonempty open set.
We will construct
a continuous path $(\kappa_t )_{t \in [0, 1]}$
in $\Aut (A^{\otimes \Nb} )$
such that $\kappa_0$ is the identity automorphism,
$\kappa_t \circ \alpha \circ \kappa_t^{- 1}
   \in U_{\alpha, \Omega, \eps}$
for all $t \in [0, 1]$,
and
$\kappa_1 \circ \alpha \circ \kappa_1^{- 1} \in W$.
By discretizing this path in small enough increments,
this will show that $\overline{\cO (\alpha, U, V)}$
contains $U_{\alpha, \, E, \, \eps'}$
and hence has nonempty interior.

A simple approximation argument
provides $\gamma \in \Aut (A^{\otimes \Nb})$,
$\delta > 0$,
$q \in \Nb$ with $q > m$,
and a finite set $\Upsilon_0 \subseteq A^{\otimes [1, q]}$
such that,
if we
set
\[
\Upsilon
 = \big\{ a \otimes \unit_{[q + 1, \, \infty)}
      \Colon a \in \Upsilon_0 \big\}
 \subseteq A^{\otimes \Nb},
\]
then
we have $U_{\gamma, \Upsilon, \delta} \subseteq W$.
By Lemma~\ref{L-product} we may furthermore assume,
increasing $q$ if necessary,
that there is an automorphism ${{\widetilde{\gamma}}}$
of $A^{\otimes [1, q]}$
such that
\begin{equation}\label{Eq_3X31_GmUp}
\| ({\widetilde{\gamma}} \otimes \id_{[q + 1, \, \infty )} ) (b)
                - \gamma (b) \|
        < \frac{\delta}{2}
\end{equation}
for all $b \in \Upsilon$ and
\begin{equation}\label{Eq_3X31_GmOm}
\| ({\widetilde{\gamma}} \otimes \id_{[q + 1, \, \infty )} ) (b)
  - \gamma (b) \| < \eps'
\end{equation}
for all $b \in E$.

By Lemma~\ref{L_shift_dense} there is an isomorphism
$\theta \Colon A^{\otimes [1, q]} \to A$ such that
\begin{equation}\label{Eq_3X31_ThUp}
\| (\theta^{-1} \circ \beta \circ \theta ) (a)
      - {\widetilde{\gamma}} (a) \|
< \frac{\delta}{2}
\end{equation}
for all $a \in \Upsilon_0$ and
\begin{equation}\label{Eq_3X31_ThE}
\big\| (\theta^{- 1}\circ \beta \circ \theta )
          (x \otimes \unit_{[m + 1, \, q]} )
   - {\widetilde{\gamma}} (x \otimes \unit_{[m + 1, \, q]} ) \big\|
< \eps'
\end{equation}
for all $x \in E_0$.

Let $\flip$ be the tensor flip on
$A^{\otimes [m + 1, \, q]} \otimes A^{\otimes [m + 1, \, q]}$.
The algebra
$A^{\otimes [m + 1, \, q]} \otimes A^{\otimes [m + 1, \, q]}$
is strongly self-absorbing and $K_1$-injective
(since $A$ is).
So $\flip$ is
strongly asymptotically inner
(in the sense of Definition 1.1(ii) of~\cite{DadWin09})
by Theorem~2.2 of~\cite{DadWin09}.
Therefore there is a point-norm continuous path
$(\sigma_t )_{t \in [0, 1]}$ of automorphisms of
$A^{\otimes [m + 1, q]} \otimes A^{\otimes [m + 1, q]}$
such that $\sigma_0 = \id$ and
$\sigma_1 = \flip$.
Set
\[
B = A^{\otimes [1, m]} \otimes A^{\otimes [1, m]}
    \otimes A^{\otimes [m + 1, \, q]} \otimes A^{\otimes [m + 1, \, q]},
\]
and let
$\psi \Colon B \to A^{\otimes [1, q]} \otimes A^{\otimes [1, q]}$
be the isomorphism
\[
c_1 \otimes c_2 \otimes d_1 \otimes d_2
  \mapsto c_1 \otimes d_1 \otimes c_2 \otimes d_2.
\]
Then we have an isomorphism
\[
\tau = (\id_{[1, q]} \otimes \theta) \circ \psi
  \Colon B \to A^{\otimes [1, \, q + 1]}.
\]
For $t \in [0, 1]$,
set
${{\widetilde{\kappa}}}_t
  = \tau \circ (\rho_t \otimes \sigma_t)^{-1} \circ \tau^{-1}$,
and define
$\kappa_t
  = {{\widetilde{\kappa}}}_t \otimes \id_{[q + 2, \, \infty)}
  \in \Aut (A^{\otimes \Nb})$.
Then $(\kappa_t )_{t \in [0, 1]}$ is a point-norm continuous path
in $\Aut (A^{\otimes \Nb} )$.
We complete the proof by showing that
$\kappa_0 = \id_{\Nb}$,
that $\kappa_t \circ \alpha \circ \kappa_t^{-1}
  \in U_{\alpha, \Omega, \eps}$
for all $t \in [0, 1]$,
and that $\kappa_1 \circ \alpha \circ \kappa_1^{-1}
 \in U_{\gamma, \Upsilon, \delta}$.

That $\kappa_0 = \id_{\Nb}$ is obvious.

We prove that $\kappa_1 \circ \alpha \circ \kappa_1^{-1}
 \in U_{\gamma, \Upsilon, \delta}$.
Let $b \in \Upsilon$.
Then there is $a \in \Upsilon_0$
such that
\[
b = a \otimes 1_A \otimes \unit_{[q + 2, \, \infty )}
  \in A^{\otimes [1, q]} \otimes A
      \otimes A^{\otimes [q + 2, \, \infty )}.
\]
Since $\rho_1$ is the tensor flip on
$A^{\otimes [1, m]} \otimes A^{\otimes [1, m]}$
and $\sigma_1$ is the tensor flip on
$A^{\otimes [m + 1, \, q]} \otimes A^{\otimes [m + 1, \, q]}$,
it follows that
$\psi \circ (\rho_1 \otimes \sigma_1) \circ \psi^{-1}$
is the tensor flip $\flip_q$ on
$A^{\otimes [1, q]} \otimes A^{\otimes [1, q]}$.
Therefore
\begin{align*}
({\widetilde{\kappa}}_1)^{-1} (a \otimes 1_A)
& = (\id_{[1, q]} \otimes \theta) \circ \flip_q
          \circ (\id_{[1, q]} \otimes \theta)^{-1}
        (a \otimes \theta (\unit_{[1, q]} ))
    \\
& = \unit_{[1, q]} \otimes \theta (a).
\end{align*}
Continuing with similar reasoning,
we conclude that
\begin{equation}\label{Eq_3Y03_Star}
\big( {\widetilde{\kappa}}_1 \circ \beta^{\otimes [1, \, q + 1]}
   \circ ({\widetilde{\kappa}}_1)^{- 1} \big) (a \otimes 1_A )
 = (\theta^{-1} \circ \beta \circ \theta) (a) \otimes 1_A.
\end{equation}
In the second step of the following calculation,
recall that $b = a \otimes 1_A \otimes \unit_{[q + 2, \, \infty )}$,
use (\ref{Eq_3Y03_Star}) and~(\ref{Eq_3X31_ThUp}) on the first term,
and use~(\ref{Eq_3X31_GmUp}) on the second term,
getting
\begin{align*}
\lefteqn{\| (\kappa_1 \circ \alpha \circ \kappa_1^{- 1} )
  (b)
   - \gamma (b) \|}
    \hspace*{20mm}
 \\
\hspace*{20mm} & \leq \big\| \big[ \big( {\widetilde{\kappa}}_1
  \circ \beta^{\otimes [1, \, q + 1]}
       \circ ( {\widetilde{\kappa}}_1)^{- 1} \big)
   (a \otimes 1_A )
- {\widetilde{\gamma}} (a) \otimes 1_A \big]
        \otimes \unit_{[q + 2, \, \infty )} \big\|
    \\
& \hspace*{15mm} {\mbox{}}
       + \big\| ({\widetilde{\gamma}} \otimes \id_A
             \otimes \id_{[q + 2, \, \infty )} )
    (b)
- \gamma (b) \big\|
 \\
& < \frac{\delta}{2} + \frac{\delta}{2}
  = \delta.
\end{align*}
Thus
$\kappa_1 \circ \alpha \circ \kappa_1^{- 1}
   \in U_{\gamma, \Upsilon, \delta}$,
as desired.

Finally, we prove that
$\kappa_t \circ \alpha \circ \kappa_t^{-1}
  \in U_{\alpha, \Omega, \eps}$
for all $t \in [0, 1]$.
Let $b \in \Omega$ and let $t \in [0, 1]$.
We need to prove that
$\| (\kappa_t \circ \alpha \circ \kappa_t^{- 1} ) (b) - \alpha (b) \|
 < \eps$.

There is $a \in \Omega_0$
such that
\[
b = a \otimes \unit_{[m + 1, \, q]} \otimes 1_A
       \otimes \unit_{[q + 2, \, \infty )}
  \in A^{\otimes [1, m]} \otimes A^{\otimes [m + 1, \, q]}
      \otimes A \otimes A^{\otimes [q + 2, \, \infty )}.
\]

We carry out two preliminary estimates.
For the first,
recall that $E_0 \subseteq A^{\otimes [1, m]}$ was a subset
of the unit ball chosen
so that there are scalars
$\lambda_{x, y} = \lambda_{x, y, t, a} \in \Cb$
with $| \lambda_{x, y} | \leq M$ for $x, y \in E_0$
such that
\begin{equation}\label{Eq_3Y01_New7}
\bigg\| \rho_t (a \otimes \unit_{[1, m]} ) - \sum_{x, y \in E_0}
   \lambda_{x, y} \, x \otimes y \bigg\| < \frac{\eps}{3}.
\end{equation}
We have
\begin{align*}
({\widetilde{\kappa}}_t)^{- 1}
  \big( a \otimes \unit_{[m + 1, \, q]} \otimes 1_A
         \big)
& = (\tau \circ (\rho_t \otimes \sigma_t) )
        (a \otimes \unit_{[1, \, m]} \otimes \unit_{[m + 1, \, q]}
             \otimes \unit_{[m + 1, \, q]})
   \\
& = \big( (\id_{[1, q]} \otimes \theta) \circ \psi \big)
   \big( \rho_t (a \otimes \unit_{[1, \, m]} )
     \otimes \unit_{[m + 1, \, q]} \otimes \unit_{[m + 1, \, q]} \big).
\end{align*}
So
\begin{equation}\label{Eq_3Y01_Rh}
\bigg\| ({\widetilde{\kappa}}_t)^{- 1}
  \big( a \otimes \unit_{[m + 1, \, q]} \otimes 1_A
  \big)
    - \sum_{x, y \in E_0}
   \lambda_{x, y} \, x \otimes \unit_{[m + 1, \, q]}
              \otimes \theta (y \otimes \unit_{[m + 1, \, q]}) \bigg\|
 < \frac{\eps}{3}.
\end{equation}

Our second preliminary estimate is that for $y \in E_0$,
we have
\begin{equation}\label{Eq_3Y01_ADiff}
\big\| (\theta^{-1} \circ \beta \circ \theta)
        (y \otimes \unit_{[m + 1, \, q]})
  - \beta^{\otimes [1, q]} (y \otimes \unit_{[m + 1, \, q]}) \big\|
< 3 \eps'.
\end{equation}
To prove this,
since $\gamma \in W \subseteq U_{\alpha, \, E, \, \eps'}$,
we have
\[
\big\| \gamma (y \otimes \unit_{[m + 1, \, \infty)})
  - \alpha (y \otimes \unit_{[m + 1, \, \infty)}) \big\|
 < \eps'.
\]
Combine this inequality with
(\ref{Eq_3X31_GmOm}) and~(\ref{Eq_3X31_ThE})
(tensoring with a suitable identity as needed)
to get
\[
\big\| (\theta^{-1} \circ \beta \circ \theta)
        (y \otimes \unit_{[m + 1, \, q]})
      \otimes \unit_{[q + 1, \, \infty)}
  - \alpha (y \otimes \unit_{[m + 1, \, \infty)}) \big\|
< 3 \eps'.
\]
Now use
\[
\alpha (y \otimes \unit_{[m + 1, \, \infty)})
 = \beta^{\otimes [1, q]} (y \otimes \unit_{[m + 1, \, q]})
       \otimes \unit_{[q + 1, \, \infty)}
\]
and drop the tensor factor $\unit_{[q + 1, \, \infty)}$
to get~(\ref{Eq_3Y01_ADiff}).
{}From~(\ref{Eq_3Y01_ADiff}) and $| \lambda_{x, y} | \leq M$,
$\| x \| \leq 1$, and $\| y \| \leq 1$
for $x, y \in E_0$,
we then get
\begin{align}\label{Eq_3Y01_Lxy}
& \bigg\| \sum_{x, y \in E_0} \lambda_{x, y} \,
      \beta^{\otimes [1, q]} (x \otimes \unit_{[m + 1, \, q]})
        \otimes (\theta^{-1} \circ \beta \circ \theta)
        (y \otimes \unit_{[m + 1, \, q]})
  \\
& \hspace*{6em} {\mbox{}}
   - \sum_{x, y \in E_0} \lambda_{x, y} \,
      \beta^{\otimes [1, q]} (x \otimes \unit_{[m + 1, \, q]})
        \otimes \beta^{\otimes [1, q]} (y \otimes \unit_{[m + 1, \, q]})
              \bigg\|
                \notag
  \\
& \hspace*{3em} {\mbox{}}
  \leq 3 M \card (E_0)^2 \eps'
  < \frac{\eps}{3}.
                \notag
\end{align}

We are now ready to show that
$\| (\kappa_t \circ \alpha \circ \kappa_t^{- 1} ) (b) - \alpha (b) \|
 < \eps$.
We calculate (justifications given afterwards):

\smallskip

{\allowdisplaybreaks{

\begin{align*}
& (\kappa_t \circ \alpha \circ \kappa_t^{- 1} ) (b)
\\
& \hspace*{1em} {\mbox{}}
\approx_{\eps / 3}
  {\widetilde{\kappa}}_t \bigg(
     \sum_{x, y \in E_0} \lambda_{x, y} \,
      \beta^{\otimes [1, q]} (x \otimes \unit_{[m + 1, \, q]})
        \otimes (\beta \circ \theta) (y \otimes \unit_{[m + 1, \, q]})
     \bigg)
     \otimes \unit_{[q + 2, \, \infty)}
\\
& \hspace*{1em} {\mbox{}}
= \big( (\id_{[1, q]} \otimes \theta) \circ \psi
     \circ (\rho_t \otimes \sigma_t)^{-1} \circ \psi^{-1} \big)
\\
& \hspace*{6em} {\mbox{}}
      \bigg(
     \sum_{x, y \in E_0} \lambda_{x, y} \,
      \beta^{\otimes [1, q]} (x \otimes \unit_{[m + 1, \, q]})
        \otimes (\theta^{-1} \circ \beta \circ \theta)
         (y \otimes \unit_{[m + 1, \, q]})
     \bigg)
     \otimes \unit_{[q + 2, \, \infty)}
\\
& \hspace*{1em} {\mbox{}}
\approx_{\eps / 3}
   \big( (\id_{[1, q]} \otimes \theta) \circ \psi
     \circ (\rho_t \otimes \sigma_t)^{-1} \circ \psi^{-1} \big)
\\
& \hspace*{6em} {\mbox{}}
      \bigg( \sum_{x, y \in E_0} \lambda_{x, y} \,
      \beta^{\otimes [1, q]} (x \otimes \unit_{[m + 1, \, q]})
        \otimes \beta^{\otimes [1, q]} (y \otimes \unit_{[m + 1, \, q]})
     \bigg)
     \otimes \unit_{[q + 2, \, \infty)}
\\
& \hspace*{1em} {\mbox{}}
= \big( (\id_{[1, q]} \otimes \theta) \circ \psi
     \circ (\rho_t \otimes \sigma_t)^{-1} \big)
\\
& \hspace*{6em} {\mbox{}}
      \bigg( \sum_{x, y \in E_0} \lambda_{x, y} \,
      \beta^{\otimes [1, m]} (x) \otimes \beta^{\otimes [1, m]} (y)
          \otimes \unit_{[m + 1, \, q]}
          \otimes \unit_{[m + 1, \, q]}
     \bigg)
     \otimes \unit_{[q + 2, \, \infty)}
\\
& \hspace*{1em} {\mbox{}}
= \big( (\id_{[1, q]} \otimes \theta) \circ \psi \big)
\\
& \hspace*{6em} {\mbox{}}
      \bigg( \sum_{x, y \in E_0} \lambda_{x, y} \,
      \big( \beta^{\otimes [1, m]} \otimes \beta^{\otimes [1, m]} \big)
              (\rho_t^{-1} (x \otimes y) )
          \otimes \unit_{[m + 1, \, q]}
          \otimes \unit_{[m + 1, \, q]}
     \bigg)
     \otimes \unit_{[q + 2, \, \infty)}
\\
& \hspace*{1em} {\mbox{}}
= \big( (\id_{[1, q]} \otimes \theta)
 \circ \big( \beta^{\otimes [1, q]} \otimes \beta^{\otimes [1, q]} \big)
  \circ \psi \circ
   \big( \rho_t^{-1} \otimes \id_{[m + 1, \, q]}
        \otimes \id_{[m + 1, \, q]} \big) \big)
\\
& \hspace*{6em} {\mbox{}}
      \bigg( \sum_{x, y \in E_0} \lambda_{x, y} \,
              x \otimes y
          \otimes \unit_{[m + 1, \, q]}
          \otimes \unit_{[m + 1, \, q]}
     \bigg)
     \otimes \unit_{[q + 2, \, \infty)}
\\
& \hspace*{1em} {\mbox{}}
\approx_{\eps / 3}
   \big( (\id_{[1, q]} \otimes \theta)
 \circ \big( \beta^{\otimes [1, q]} \otimes \beta^{\otimes [1, q]} \big)
  \circ \psi \big)
      \big( a \otimes \unit_{[1, m]}
          \otimes \unit_{[m + 1, \, q]}
          \otimes \unit_{[m + 1, \, q]}
     \big)
     \otimes \unit_{[q + 2, \, \infty)}
\\
& \hspace*{1em} {\mbox{}}
= \beta^{\otimes [1, q]} (a \otimes \unit_{[m + 1, \, q]})
   \otimes \unit_{[q + 1, \, \infty)}
\\
& \hspace*{1em} {\mbox{}}
= \alpha (b).
\end{align*}
}}
The first step follows from (\ref{Eq_3Y01_Rh})
and $\alpha = \beta^{\otimes \Nb}$.
The second step is the definition of
${\widetilde{\kappa}}_t$.
The third follows from~(\ref{Eq_3Y01_Lxy}).
The fourth is the definition of~$\psi$ and
$\beta^{\otimes [m + 1, \, q]} (1) = 1$.
For the fifth,
we use
\[
\big( \beta^{\otimes [1, m]} \otimes \beta^{\otimes [1, m]} \big)
      \circ \rho_t^{-1}
 = \rho_t^{-1} \circ
   \big( \beta^{\otimes [1, m]} \otimes \beta^{\otimes [1, m]} \big),
\]
which follows from~(\ref{Eq_3Y01_Flip}).
The sixth step uses the definition of $\psi$
and the relation $\beta^{\otimes [m + 1, \, q]} (1) = 1$.
The seventh step follows from (\ref{Eq_3Y01_New7}),
the eighth is easy,
and the last step is $\alpha = \beta^{\otimes \Nb}$.
\end{proof}

For any unital \ca~$A$,
we denote its unitary group by $U (A)$,
and equip it with the norm topology.

\begin{remark}\label{R-unitary}
Let $A$ be a strongly self-absorbing \ca.
One can show using Lemma~2.1 of~\cite{DadWin09}
that the action $U (A) \curvearrowright \Aut (A)$
given by $(u, \alpha) \mapsto \Ad (u) \circ \alpha$ is turbulent.
One might expect to be also able to use Lemma~2.1
of~\cite{DadWin09} to prove
Lemma~\ref{T-turbulence} with the additional help
of stability (as established
in the proof of Lemma~\ref{L_shift_dense}
for the various cases at hand) to enable the passage
from unitary equivalence to conjugacy.
However, this approach does not seem to provide
the required amount of control, which we were
ultimately able to achieve
using the above malleability argument.

To establish turbulence for the action
$U (A) \curvearrowright \Aut (A)$
we proceed as follows.
Observe that the orbits are just translates
of the group $\Inn (A)$ of inner
automorphisms.
As $\Inn (A)$ a non-closed Borel subgroup of $\Aut (A)$
\cite[Proposition 2.4 and Theorem 3.1]{Phi1},
it follows from Pettis's theorem
(see \cite[Theorem 2.3.2]{Gao09})
that $\mathrm{Inn}(A)$ is meager in $\Aut (A)$.
Moreover, $\Inn (A)$ is dense in $\Aut (A)$
by Proposition 1.13 of~\cite{TomWin07}.
It follows that every orbit is dense and meager.
It thus remains to show, given $\alpha \in \Aut (A)$,
a neighbourhood $U$ of $\alpha$ in $\Aut (A)$,
and a neighbourhood $V$ of $\unit$ in $U (A)$,
that the local orbit $\cO (\alpha, U, V)$ is somewhere dense.

To this end, we may assume that $U$ is of the form
$U_{\alpha, \Omega, \eps}$
as in Notation~\ref{N_3X24_Nbhd}
for some finite set $\Omega \subseteq A$ and $\eps > 0$,
and that $V = \{ u \in U (A) \Colon \| u - 1 \| < \eps \}$.
Write $U_0 (A)$ for the path connected component
of the identity in the unitary group of $A$.
By Lemma~2.1 of~\cite{DadWin09}, there are a finite set
$\Upsilon \subseteq A$ and $\delta > 0$
such that if $w$ is a unitary in
$U_0 (A)$ satisfying $\| [w, x] \| < \delta$
for all $x \in \Upsilon$,
then there is a continuous path $(w_t )_{t \in [0, 1]}$
of unitaries
in $U_0 (A)$ such that $w_0 = w$,
$w_1 = \unit$,
and $\| [w_t, x ] \| < \eps$
for all $x \in \alpha (\Omega )$ and $t \in [0, 1]$.
To complete the argument
we will show that the open set
$U_{\alpha, \, \alpha^{- 1} (\Upsilon ), \, \delta}$
is contained in the closure of $\cO (\alpha, U, V)$.
So let $\beta \in U_{\alpha, \, \alpha^{- 1} (\Upsilon ), \, \delta}$
and
let $W$ be an open neighbourhood of $\beta$ contained in $U$.
By Theorem 3.1 of~\cite{Win11},
the algebra $A$ is automatically $\cZ$-stable.
In particular
(see Remark 3.3 of~\cite{Win11}),
it is $K_1$-injective,
so Proposition 1.13 of~\cite{TomWin07} applies.
Thus there is $u \in U_0 (A)$ such that
$\Ad (u) \circ \alpha
 \in W \subseteq U_{\alpha, \, \alpha^{- 1} (\Upsilon), \, \delta}$.
In particular,
$\Ad (u) \in U_{\id_A, \Upsilon, \delta}$,
and so
by our choice of $\Upsilon$ and $\delta$ there is
a continuous path $(u_t )_{t \in [0, 1]}$ of unitaries
in $U_0 (A)$ such that $u_0 = u$, $u_1 = \unit$,
and $\| [u_t, x] \| < \eps$
for all $x \in \alpha (\Omega )$ and $t \in [0, 1]$.
This last condition is the same as saying that
$\Ad (u_t) \circ \alpha \in U_{\alpha, \Omega, \eps}$
for all $t \in [0, 1]$.

We can now discretize the path $(u_t )_{t \in [0, 1]}$
in small enough increments
to verify the membership of $\beta$ in $\cO (\alpha, U, V)$.
We conclude that
$U_{\alpha, \, \alpha^{- 1} (\Upsilon ), \, \delta}$ is
contained in the closure of $\cO (\alpha, U, V)$,
as desired.
\end{remark}

Remark~\ref{R-unitary}
implies that automorphisms of strongly self-absorbing \ca{s}
are not classifiable up to unitary equivalence by countable
structures,
by the methods used in Sections \ref{Sec_3} and~\ref{S-JS stable}.
This consequence is proved using different methods in~\cite{Lup13},
in much greater generality
(for separable \ca{s} which do not have continuous trace).

\section{Meagerness of conjugacy classes and generic
 turbulence}\label{Sec_3}

With the aim of completing the proof
of Theorem~\ref{T-turbulence}, we now
concentrate on verifying the meagerness of orbits
condition in the definition of generic turbulence.
For this we will employ a result of Rosendal
that gives a criterion in terms of periodic approximation
for every conjugacy class in a Polish group
to be meager \cite[Proposition 18]{Ros09}.
As we will later relativize this result
in Lemma~\ref{L-rosendal hom} for applications
in Sections~\ref{S-JS stable} and \ref{S-stable},
it will be convenient to abstract the relevant
periodic approximation property into a definition.

\begin{definition}\label{D-rosendal}
We say that a Polish group $G$ has the {\emph{Rosendal property}}
if for every infinite set $I \subseteq \Nb$ and
neighbourhood $V$ of $1$ in $G$ the set
\[
\big\{ g \in G \colon
 {\mbox{there is $n \in I$ such that $g^n \in V$}} \big\}
\]
is dense.
\end{definition}

Rosendal's result \cite[Proposition 18]{Ros09} can now be formulated as follows.

\begin{lemma}\label{L-rosendal}
Let $G$ be a nontrivial Polish group with the Rosendal property.
Then every conjugacy class in $G$ is meager.
\end{lemma}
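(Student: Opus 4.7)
The plan is to carry out the Baire category argument of \cite[Proposition 18]{Ros09}, which the Rosendal property is designed to encapsulate. Fix $g \in G$ and set $C(g) := \{kgk^{-1} : k \in G\}$. Since $C(g)$ is the continuous image of $G$ under the conjugation map $k \mapsto kgk^{-1}$, it is analytic, and hence has the Baire property. Suppose for contradiction that $C(g)$ is non-meager; then there is a nonempty open set $U \subseteq G$ in which $C(g)$ is comeager.

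For any open neighborhood $V$ of $1$ and any infinite $I \subseteq \Nb$, the set
\[
A_{I,V} = \big\{ h \in G \colon h^n \in V \text{ for some } n \in I \big\}
 = \bigcup_{n \in I} \big\{ h \in G \colon h^n \in V \big\}
\]
is a union of preimages of $V$ under the continuous power maps $h \mapsto h^n$, hence open, and it is dense by the Rosendal property. Therefore $A_{I,V} \cap U$ is comeager in $U$, and intersecting with $C(g) \cap U$ produces some $h = kgk^{-1} \in C(g) \cap A_{I,V}$ satisfying $kg^n k^{-1} = h^n \in V$ for some $n \in I$; equivalently, $V \cap C(g^n) \neq \emptyset$. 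Since this must hold for every infinite $I \subseteq \Nb$, the set $S_V := \{n \in \Nb \colon V \cap C(g^n) \neq \emptyset\}$ meets every infinite subset of $\Nb$ and is therefore cofinite, for every neighborhood $V$ of $1$. Diagonalizing along a decreasing neighborhood base $(V_k)$ of $1$ then yields indices $n_j \to \infty$ and elements $\ell_j \in G$ with $\ell_j g^{n_j} \ell_j^{-1} \to 1$.

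The main obstacle is to convert this approximate recurrence to~$1$ of conjugates of powers of~$g$ into a genuine contradiction, or equivalently to deduce $g = 1$. The idea, which is the heart of Rosendal's proof in \cite[Proposition 18]{Ros09}, is to apply the Rosendal property one more time with an infinite set $I$ selected in light of the sequences $(n_j)$ and $(\ell_j)$, so as to rule out any nontrivial~$g$. Once $g = 1$ is obtained, the conjugacy class reduces to $\{1\}$, which is meager because the nontriviality of $G$ combined with the Rosendal property forces $G$ to be non-discrete: in a discrete nontrivial Polish group any $g$ of finite order $m \geq 2$ would witness the failure of the density of $A_{I, \{1\}}$ for $I = \{1 + km \colon k \geq 1\}$, since $g^{1 + km} = g \neq 1$. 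Hence $G$ has no isolated points, and singletons in $G$ are meager.
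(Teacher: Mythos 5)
Your Baire-category setup is fine as far as it goes, but there is a genuine gap at exactly the point you flag: you never actually deduce $g=1$, deferring instead to ``the heart of Rosendal's proof'' with only a vague indication that the Rosendal property should be applied once more. Moreover, the intermediate statement you do reach --- $\ell_j g^{n_j}\ell_j^{-1}\to 1$ for conjugators $\ell_j$ over which you have no control --- is too weak to force $g=1$: conjugates of powers of $g$ accumulating at $1$ say nothing about $g$ itself. The information you discard when you pass from ``$h=kgk^{-1}$ satisfies $h^n\in V$'' to ``$V\cap C(g^n)\neq\varnothing$'' is precisely what is needed.

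The missing idea is to replace your open dense sets $A_{I,V}$ by a set that is simultaneously comeager and \emph{conjugation-invariant}. For an infinite $I\subseteq\Nb$ and a decreasing neighbourhood base $(V_n)_{n=1}^{\infty}$ of $1$, put
\[
Q_I=\big\{ h\in G\colon \text{$h^{k_n}\to 1$ for some strictly increasing sequence $(k_n)$ in $I$}\big\}.
\]
Then $Q_I=\bigcap_{n=1}^{\infty}Q_{I,n}$ with $Q_{I,n}=\{h\colon \text{$h^k\in V_n$ for some $k\in I$ with $k\geq n$}\}$; each $Q_{I,n}$ is open and contains your dense set $A_{I\setminus\{1,\ldots,n-1\},\,V_n}$, so $Q_I$ is comeager, and $Q_I$ is conjugation-invariant since $(khk^{-1})^{k_n}=kh^{k_n}k^{-1}$. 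A non-meager conjugacy class therefore meets $Q_I$ and, by invariance, is contained in $Q_I$ for every infinite $I$; hence every subsequence of $(g^n)_{n=1}^{\infty}$ has a further subsequence tending to $1$, so $g^n\to 1$ and $g^{n+1}\to 1$, whence $g=1$. Your closing observation that $\{1\}$ is meager because a nontrivial group with the Rosendal property cannot be discrete is correct and finishes the proof. Note that the paper itself gives no proof of Lemma~\ref{L-rosendal}, citing \cite[Proposition 18]{Ros09}, but exactly this invariance argument is written out in the proof of the relativized Lemma~\ref{L-rosendal hom}, which you should compare.
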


For a unital \ca~$A$ we write $U_0 (A)$
for the path connected component of the identity
in the unitary group $U (A)$ of $A$,
and $\Inn_0 (A)$ for the normal subgroup of $\Aut (A)$
consisting of all automorphisms of $A$
of the form $\Ad (u)$ for some $u \in U_0 (A)$.

\begin{lemma}\label{L-rr0 rosendal}
Let $A$ be a separable unital \ca{}
with real rank zero such that $\Inn_0 (A)$ is dense
in $\Aut (A)$.
Then $\Aut (A)$ has the Rosendal property.
\end{lemma}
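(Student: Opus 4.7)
The plan is to verify Definition~\ref{D-rosendal} directly. Fix $\alpha \in \Aut (A)$, a basic open neighbourhood $U_{\alpha, \Omega, \eps}$ of $\alpha$ (Notation~\ref{N_3X24_Nbhd}), an open neighbourhood $V$ of $\id_A$ in $\Aut (A)$, and an infinite set $I \subseteq \Nb$. I must produce $\beta \in U_{\alpha, \Omega, \eps}$ together with $n \in I$ satisfying $\beta^n \in V$. The strategy is to construct $\beta = \Ad (w)$ for a unitary $w \in A$ whose spectrum is contained in the $n$-th roots of unity, so that automatically $w^n = \unit_A$ and therefore $\beta^n = \id_A \in V$.

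First, invoking the density of $\Inn_0 (A)$ in $\Aut (A)$, I would pick $u \in U_0 (A)$ with $\Ad (u) \in U_{\alpha, \Omega, \eps / 2}$. Next, using the real rank zero hypothesis, I would approximate $u$ in norm by a unitary of the form
\[
v = \sum_{j = 1}^m e^{i \theta_j} p_j ,
\]
where $p_1, \ldots, p_m$ are mutually orthogonal projections in $A$ summing to $\unit_A$ and $\theta_1, \ldots, \theta_m \in \Rb$. This relies on the known fact that in a unital \ca{} of real rank zero the unitaries with finite spectrum are dense in $U_0 (A)$.

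Finally, given $n \in \Nb$, set $k_j = \lfloor n \theta_j / (2 \pi) \rfloor$ and
\[
w_n = \sum_{j = 1}^m e^{2 \pi i k_j / n} p_j .
\]
Then $w_n$ is a unitary in $A$ with $w_n^n = \sum_{j = 1}^m e^{2 \pi i k_j} p_j = \unit_A$, while the orthogonality of the $p_j$ gives $\| v - w_n \| \leq 2 \pi / n$. Choosing $n \in I$ large enough (which is possible since $I$ is infinite) makes $\Ad (w_n)$ land in $U_{\alpha, \Omega, \eps}$, so $\beta = \Ad (w_n)$ satisfies both required conditions.

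The main obstacle is the invoked density of finite-spectrum unitaries in $U_0 (A)$ for unital real rank zero \ca{s}. Unlike the density of finite-spectrum self-adjoints in $A_{\mathrm{sa}}$ (essentially the definition of real rank zero), this statement is not entirely formal: it requires one to control the exponential map $h \mapsto \exp (i h)$ well enough to lift the approximation to the unitary group. This is a well-established consequence of real rank zero (see e.g.\ the work of H.~Lin on exponential length and rank), and once it is in hand the rest of the argument is essentially an exercise in Diophantine approximation.
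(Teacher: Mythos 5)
Your proposal is correct and follows essentially the same route as the paper: approximate $\alpha$ by $\Ad(u)$ with $u$ a finite-spectrum unitary (using density of $\Inn_0(A)$ together with Lin's theorem that real rank zero is equivalent to density of finite-spectrum unitaries in $U_0(A)$, which is exactly the reference the paper invokes), then round the spectral phases to $n$-th roots of unity for large $n \in I$ to obtain $w$ with $w^n = \unit$ and $\Ad(w)^n = \id_A$. The only cosmetic difference is that the paper performs the two approximations in a single step and tracks the constant $M = 1 + \sup_{a \in \Omega} \| a \|$ explicitly where you write ``$n$ large enough.''
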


\begin{proof}
Let $I$ be an infinite subset of $\Nb$.
Set
\[
S = \big\{ \varphi \in \Aut (A) \colon
 {\mbox{there is $n \in I$ such that $\varphi^n = \id_A$}} \big\}.
\]
It suffices to prove that $S$ is dense.
Let $\alpha \in \Aut (A),$
let $\Omega \subseteq A$ be finite,
and let $\eps > 0.$
It suffices to show (following Notation~\ref{N_3X24_Nbhd}) that
$S \cap U_{\alpha, \Omega, \eps} \neq \varnothing$.
Set $M = 1 + \sup ( \{ \| a \| \Colon a \in \Omega \} )$.
As real rank zero is equivalent to the density in $U_0 (A)$
of the unitaries in $U_0 (A)$ with finite spectrum~\cite{Lin93},
the density of $\Inn_0 (A)$ in $\Aut (A)$
implies the existence of a
unitary $u$ with finite spectrum such that
$\| \alpha (a) - u a u^* \| < \eps /2$ for all $a \in \Omega$.
Since $u$ has finite spectrum,
there are $k \in \Nb$,
projections $p_1, p_2, \ldots, p_k \in A,$
and $\theta_1, \theta_2, \ldots, \theta_k \in [0, 1)$
such that
$u = \sum_{j = 1}^k e^{2 \pi i \theta_j } p_j$.

Choose $n \in I$ such that $n > 8 \pi M / \eps$,
and
for $j = 1, 2, \ldots, k$ choose $m_j \in \{ 0, 1 \ldots, n - 1 \}$
such that
$|\theta_j - m_j / n| < 1 / n$.
Set $v = \sum_{j = 1}^k e^{2\pi im_j /n} p_j$.
Then
$v^n = 1$ and so $\Ad (v)^n = \id$.
Moreover, since
\[
\| u - v \|
  \leq \sup_{1 \leq j \leq k}
      2 \pi \left| \theta_j - \frac{m_j}{n} \right|
  \leq \frac{2 \pi}{n}
  < \frac{\eps}{4},
\]
we have, for every $a \in \Omega$,
\begin{align*}
\| \alpha (a) - v a v^* \|
& \leq \| \alpha (a) - u a u^* \|
  + \| u - v \| \cdot \| a \| \cdot \| u^* \|
  + \| v \| \cdot \| a \| \cdot \| (u - v)^* \|
 \\
& < \frac{\eps}{3} + \left( \frac{\eps}{3 M} \right) M
    + M \left( \frac{\eps}{3 M} \right)
  = \eps.
\end{align*}
Thus $\Ad (v) \in U_{\alpha, \Omega, \eps}$,
as required.
\end{proof}

Lemma~\ref{L-rr0 rosendal}
shows that $\Aut (A)$ has the Rosendal property
when $A$ is $\cO_2$, $\cO_{\infty}$,
a UHF algebra,
or the tensor product of a UHF algebra and $\cO_{\infty}$,
but cannot be applied to $\cZ$
since $\cZ$ does not have real rank zero.
Indeed the only projections in $\cZ$ are $0$ and $1$.
Nevertheless we can use another argument
based on the shift automorphism.

\begin{lemma}\label{L-JS rosendal}
$\Aut (\cZ )$ has the Rosendal property.
\end{lemma}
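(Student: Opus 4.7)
The plan is to combine the density of the conjugacy class of the tensor product shift $\beta$ established in Lemma~\ref{L_shift_dense} with an explicit construction of finite-order approximations to $\beta$ via cyclic permutations of tensor factors. Identify $\cZ \cong \cZ^{\otimes \Zb}$ and view $\beta \in \Aut (\cZ)$ as the two-sided tensor shift. Because $\cZ$ has no nontrivial projections, one cannot mimic the real-rank-zero argument of Lemma~\ref{L-rr0 rosendal}; instead, the periodicity is produced by hand from the combinatorics of the shift.

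Fix an infinite $I \subseteq \Nb$ and a neighbourhood $V$ of $\id$ in $\Aut (\cZ)$. Given a basic open set $U_{\alpha, \Omega, \eps}$ (Notation~\ref{N_3X24_Nbhd}), Lemma~\ref{L_shift_dense} supplies $h \in \Aut (\cZ)$ with $h \beta h^{- 1}$ within $\eps / 2$ of $\alpha$ on $\Omega$. Since the approximation that I will construct will satisfy $\psi^n = \id$ exactly (so that $(h \psi h^{- 1})^n = \id \in V$ regardless of $V$), the problem reduces to producing, for any finite $\Omega' \subseteq \cZ^{\otimes \Zb}$ and any $\eps' > 0$, an automorphism $\psi \in \Aut (\cZ^{\otimes \Zb})$ of some order $n \in I$ which is within $\eps'$ of $\beta$ on $\Omega'$. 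Taking $\Omega' = h^{- 1} (\Omega)$ and $\eps' = \eps / 2$ and then conjugating by $h$ completes the job.

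To build $\psi$, approximate each element of $\Omega'$ to within $\eps' / 2$ by an element supported on $\cZ^{\otimes [- M, M]}$ for $M$ sufficiently large, using the embedding of Notation~\ref{N_3X31_TP}. Pick $n \in I$ with $n \geq 2M + 2$, set $J_n = [- M, \, n - M - 1]$, and let $\sigma_n \in \Aut (\cZ^{\otimes J_n})$ be the cyclic permutation that sends the tensor factor at position $i$ to position $i + 1 \pmod{n}$. Define
\[
\psi = \sigma_n \otimes \id_{\cZ^{\otimes (\Zb \setminus J_n)}}
 \in \Aut (\cZ^{\otimes \Zb}).
\]
Then $\psi^n = \id$ since $\sigma_n^n = \id$, and $\psi$ coincides with $\beta$ on every element supported in $\cZ^{\otimes [- M, M]}$: each occupied factor at position $i \in [- M, M]$ is sent to $i + 1 \in [- M + 1, \, M + 1] \subseteq J_n$, which is exactly what $\beta$ does, while the identity tensor factors outside $[- M, M]$ are preserved by both.

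The argument is essentially a discrete periodic decomposition adapted to the tensor picture afforded by the strong self-absorption $\cZ \cong \cZ^{\otimes \Zb}$. There is no serious obstacle; the only points requiring routine verification are that $\sigma_n$ genuinely defines an automorphism of $\cZ^{\otimes J_n}$ (which is standard, as any permutation of the tensor factors of a single algebra yields an automorphism) and that extension by the identity produces an automorphism of $\cZ^{\otimes \Zb}$ via the natural decomposition $\cZ^{\otimes \Zb} \cong \cZ^{\otimes J_n} \otimes \cZ^{\otimes (\Zb \setminus J_n)}$.
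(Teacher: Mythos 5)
Your proposal is correct and follows essentially the same route as the paper's proof: conjugate the two-sided tensor shift close to the given automorphism using Lemma~\ref{L_shift_dense}, then replace the shift by a cyclic permutation of the tensor factors on a block $[-M,\,n-M-1]$ with $n\in I$, $n\geq 2M+2$, extended by the identity, which has exact order $n$ and agrees with the shift on elements supported in $\cZ^{\otimes[-M,M]}$. The only differences are cosmetic (an $\eps/2+\eps/2$ split instead of the paper's $\eps/3$ splits, and phrasing the conjugation via $h\in\Aut(\cZ)$ rather than an isomorphism $\gamma\Colon\cZ^{\otimes\Zb}\to\cZ$).
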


\begin{proof}
Let $I$ be an infinite subset of $\Nb$.
As in the proof of Lemma~\ref{L-rr0 rosendal},
we actually show that automorphisms with orders in~$I$ are dense.
Thus set
\[
S = \big\{ \varphi \in \Aut (A) \colon
 {\mbox{there is $n \in I$ such that $\varphi^n = \id_A$}} \big\},
\]
let $\alpha \in \Aut (A),$
let $\Omega \subseteq A$ be finite,
and let $\eps > 0.$
We show that
$S \cap U_{\alpha, \Omega, \eps} \neq \varnothing$.
Let $\beta$ be the tensor shift automorphism of $\cZ^{\otimes \Zb}$.
By Lemma~\ref{L_shift_dense} there is an isomorphism
$\gamma \Colon \cZ^{\otimes \Zb} \to \cZ$
such that
$\| ( \gamma \circ \beta \circ \gamma^{-1} ) (a) - \alpha (a) \|
  < \eps / 3$
for all $a \in \Omega$.
By the definition of the infinite tensor product,
there are $m \in \Nb$
and a finite set
\[
\Upsilon
  \subseteq \unit \otimes \cZ^{\otimes [-m, m]}\otimes \unit
  \subseteq \cZ^{\otimes \Zb}
\]
such that for every $a \in \Omega$ there is $b \in \Upsilon$
with $\| \gamma^{- 1} (a) - b \| < \eps / 3$.
Choose $n \in I$ such that $n \geq 2 m + 2$.
Let $\kappa \in \Aut (\cZ^{\otimes [-m, \, n - m - 1]})$
be the forwards cyclic tensor shift automorphism,
which for $x_{- m}, x_{- m + 1}, \ldots, x_{n - m - 1} \in \cZ$
satisfies
\[
\kappa
 \big( x_{- m} \otimes x_{- m + 1} \otimes \cdots
   \otimes x_{n - m - 2} \otimes x_{n - m - 1} \big)
= x_{n - m - 1} \otimes x_{- m} \otimes x_{- m + 1}
     \otimes \cdots \otimes x_{n - m - 2}.
\]
Then $\kappa^n = \id$.

Let
\[
\psi
 = \id \otimes \kappa \otimes \id
 \in \Aut \big( \cZ^{\otimes (-\infty, \, - m - 1]}
  \otimes \cZ^{\otimes [-m, \, n - m - 1]}
    \otimes \cZ^{\otimes [n - m, \, \infty )} \big)
 = \Aut (\cZ^{\otimes \Zb}).
\]
Then
$\psi^n = \id$
(so that $\gamma \circ \psi \circ \gamma^{- 1} \in S$)
and $\psi (b) = \beta (b)$
for all
$b \in \unit \otimes \cZ^{\otimes [-m, m]}\otimes \unit
  \subseteq \cZ^{\otimes \Zb}.$
Now let $a \in \Omega$.
Choose $b \in \Upsilon$
such that $\| \gamma^{- 1} (a) - b \| < \eps / 3$.
Using $\psi (b) = \beta (b)$,
we get
\begin{align*}
& \| (\gamma \circ \psi \circ \gamma^{- 1}) (a) - \alpha (a) \|
  \\
& \hspace*{3em} {\mbox{}}
 \leq \| (\gamma \circ \psi) (\gamma^{- 1} (a) - b ) \|
     + \| (\gamma \circ \beta) (b - \gamma^{- 1} (a)) \|
     + \| (\gamma \circ \beta \circ \gamma^{- 1}) (a) - \alpha (a) \|
   \\
& \hspace*{3em} {\mbox{}}
 < \frac{\eps}{3} + \frac{\eps}{3} + \frac{\eps}{3} = \eps.
\end{align*}
Thus
$\gamma \circ \psi \circ \gamma^{- 1}
 \in U_{\alpha, \Omega, \eps}$,
which
establishes the desired density.
\end{proof}

From Lemmas~\ref{L-rosendal}, \ref{L-rr0 rosendal},
and \ref{L-JS rosendal} we obtain:

\begin{lemma}\label{L-meager}
Let $A$ be $\cZ$, $\cO_2$, $\cO_{\infty}$,
a UHF algebra, or the tensor product
of a UHF algebra and $\cO_{\infty}$.
Then every conjugacy class in $\Aut (A)$ is meager.
\end{lemma}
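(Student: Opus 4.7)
The plan is to reduce the assertion to the Rosendal property (Definition~\ref{D-rosendal}) and invoke Lemma~\ref{L-rosendal}. Each algebra $A$ in the statement is nontrivial, hence so is the Polish group $\Aut(A)$, so it suffices to verify that $\Aut(A)$ has the Rosendal property in every case, after which Lemma~\ref{L-rosendal} supplies the desired meagerness of conjugacy classes.

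For $A = \cZ$, this is exactly the content of Lemma~\ref{L-JS rosendal}. For the remaining algebras---$\cO_2$, $\cO_{\infty}$, a UHF algebra, or a tensor product of a UHF algebra with $\cO_{\infty}$---I would apply Lemma~\ref{L-rr0 rosendal}, which requires two hypotheses on $A$: real rank zero and density of $\Inn_0 (A)$ in $\Aut (A)$. Real rank zero is classical in each of these cases: UHF algebras are AF, $\cO_2$ and $\cO_{\infty}$ are simple and purely infinite, and real rank zero is preserved under tensoring with $\cO_{\infty}$. For the density of $\Inn_0 (A)$ one needs that every automorphism of $A$ is approximately inner through unitaries in $U_0 (A)$. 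For the strongly self-absorbing algebras on the list ($\cO_2$, $\cO_{\infty}$, UHF algebras of infinite type, and their $\cO_{\infty}$-tensor products) this is Proposition~1.13 of~\cite{TomWin07}, and for a general UHF algebra the unitary group is itself connected, so $\Inn_0 (A) = \Inn (A)$ and the classical approximate innerness of UHF automorphisms suffices.

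Putting these ingredients together, Lemma~\ref{L-rr0 rosendal} yields the Rosendal property for $\Aut (A)$ in the cases other than $\cZ$, while Lemma~\ref{L-JS rosendal} handles $\cZ$ directly; in each case Lemma~\ref{L-rosendal} then closes the argument. The proof amounts to a routine assembly of the three preceding lemmas, so I do not anticipate any substantive technical obstacle; the only thing one needs to be careful about is matching the hypothesis ``$\Inn_0 (A)$ dense'' (rather than the weaker ``$\Inn (A)$ dense'') in the non-strongly-self-absorbing cases, which is why I emphasised connectedness of $U(A)$ for a general UHF algebra.
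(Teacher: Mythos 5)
Your proof is correct and follows exactly the paper's route: the paper obtains Lemma~\ref{L-meager} precisely by combining Lemmas~\ref{L-rosendal}, \ref{L-rr0 rosendal}, and \ref{L-JS rosendal}, just as you do. Your explicit verification of the hypotheses of Lemma~\ref{L-rr0 rosendal} (real rank zero and density of $\Inn_0(A)$, including the connectedness of $U(A)$ for a general UHF algebra) is detail the paper leaves implicit, and it checks out.
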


Lemmas~\ref{L-dense turbulent point} and \ref{L-meager}
together yield the following.

\begin{theorem}\label{T-turbulence}
Let $A$ be $\cZ$, $\cO_2$, $\cO_{\infty}$,
a UHF algebra of infinite type,
or the tensor product of a UHF algebra of infinite type
and $\cO_{\infty}$.
Then the conjugation action $\Aut (A) \curvearrowright \Aut (A)$
is generically turbulent.
\end{theorem}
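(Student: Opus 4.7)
The plan is essentially immediate. I will verify the three defining conditions of generic turbulence (Definition~\ref{D_3X31_Turb})---existence of a dense orbit, existence of a turbulent orbit, and meagerness of every orbit---by appealing to the two lemmas just established, noting throughout that the orbits of the conjugation action $\Aut(A) \curvearrowright \Aut(A)$ are precisely the conjugacy classes of $\Aut(A)$.

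First, Lemma~\ref{L-dense turbulent point} provides, for each of the listed \ca{s}, an automorphism $\alpha \in \Aut(A)$ whose conjugacy class is both dense in $\Aut(A)$ and turbulent. A single such orbit simultaneously witnesses the existence of a dense orbit and of a turbulent orbit, covering two of the three clauses of Definition~\ref{D_3X31_Turb} at once.

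Next, Lemma~\ref{L-meager} asserts that every conjugacy class in $\Aut(A)$ is meager for the point-norm topology; since the orbits of the conjugation action are exactly the conjugacy classes, this yields the remaining meagerness clause. Combining these, the action $\Aut(A) \curvearrowright \Aut(A)$ satisfies all three requirements of Definition~\ref{D_3X31_Turb} and is therefore generically turbulent.

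There is no genuine obstacle at this step: all of the substantive content---the malleability and shift-density argument underlying Lemma~\ref{L-dense turbulent point}, and the Rosendal-property verification underlying Lemma~\ref{L-meager}---has already been carried out in the preceding two sections. The only point to be checked is compatibility of the hypotheses: Lemma~\ref{L-meager} is stated for a slightly broader family (UHF algebras without the infinite-type restriction, and tensor products of such with $\cO_\infty$), so it applies a fortiori to every algebra in the list of Theorem~\ref{T-turbulence}, and the two lemmas can be invoked for the same $A$ without friction.
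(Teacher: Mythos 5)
Your proposal is correct and is exactly the paper's argument: the theorem is stated in the paper as an immediate consequence of Lemma~\ref{L-dense turbulent point} (providing a single orbit that is both dense and turbulent) and Lemma~\ref{L-meager} (meagerness of every conjugacy class). Your added remark about the compatibility of hypotheses is accurate and harmless.
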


Consider a standard atomless probability space $(X, \mu )$
and the Polish group $\Aut (X, \mu )$ of
measure-preserving transformations of $X$
under the weak topology.
In~\cite{ForWei04} Foreman and Weiss showed that
restriction of the conjugation action
$\Aut (X, \mu ) \curvearrowright \Aut (X, \mu )$
to the $G_{\delta}$ subset
of essentially free ergodic automorphisms
is turbulent and not merely generically turbulent.
The essentially free automorphisms
are precisely those which satisfy the Rokhlin lemma.
The analogue of freeness for automorphisms of $\cZ$
is the property that every nonzero power
of the automorphism is strongly outer, which
is equivalent to the weak Rokhlin property~\cite{Sat10}.
The set $\WR (A)$ of automorphisms
of $\Aut (\cZ )$ with the weak Rokhlin property
is easily seen to be a $G_{\delta}$ set,
and it is dense by Lemma~\ref{L_shift_dense}
as the tensor product shift automorphism of $\cZ$ is strongly outer.
In analogy with the Foreman-Weiss result we ask the following.

\begin{problem}
Is the conjugation action
$\Aut (\cZ ) \curvearrowright \WR (\cZ )$ turbulent?
\end{problem}

Using the stability of automorphisms of $\cZ$
with the weak Rokhlin property \cite[Corollary 5.6]{Sat10},
it can be shown as in the proof of Lemma~\ref{L_shift_dense}
that any automorphism of $\cZ$ with the weak Rokhlin property
has dense conjugacy class in $\Aut (\cZ )$.
So the question of turbulence for the action
$\Aut (\cZ ) \curvearrowright \WR (\cZ )$ amounts
to the problem of whether every orbit in $\WR (\cZ )$ is
turbulent.

We can also ask the same question for the conjugation action
$\Aut (A) \curvearrowright\Rok (A)$
on the set of automorphisms satisfying the
Rokhlin property when $A$ is any one of the other \ca{s}
in Theorem~\ref{T-turbulence}.

\section{Automorphisms of $\cZ$-stable \ca{s}}\label{S-JS stable}

The purpose of this section is to prove Theorem~\ref{T-JS stable}:
for a separable $\cZ$-stable \ca~$A$,
the orbit equivalence relation of the
conjugation action $\Aut (A) \curvearrowright \overline{\Inn (A)}$
is not classifiable by countable structures.

\begin{lemma}\label{L-rosendal hom}
Let $G$ and $H$ be Polish groups such that $G$
has the Rosendal property (Definition~\ref{D-rosendal}).
Let $\varphi \Colon G \to H$ be a continuous homomorphism
such that $\varphi (G) \neq \{ 1_H \}$.
Let $E$ be an equivalence relation on $G$ 
such that for every infinite set $I \subseteq \Nb$ the set
\[
Q_I = \big\{ g \in G \colon
  {\mbox{there is a strictly increasing sequence
  $( k_n )_{n = 1}^{\infty}$ in $I$ such that
  $\varphi (g)^{k_n} \to 1$}} \big\}
\]
is $E$-invariant.
Then every equivalence class of $E$ that is dense in $G$ is meager. In particular $E$ does not have a comeager class.
\end{lemma}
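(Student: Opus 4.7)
The plan is to show that each set $Q_I$ is comeager in $G$, and then to exploit the hypothesis $\varphi(G)\neq\{1_H\}$ to exhibit, for any dense $E$-class $C$, a particular infinite $I$ for which $Q_I$ misses $C$ entirely. Since $Q_I$ is $E$-invariant, any $E$-class is either contained in $Q_I$ or disjoint from it, so this will force such a $C$ to be meager.

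First I would verify that $Q_I$ is comeager for every infinite $I\subseteq\Nb$. Fix a decreasing countable neighbourhood base $(V_m)_{m\in\Nb}$ of $1_H$ in $H$. For $m,k\in\Nb$ put
\[
D_{m,k} \ = \ \big\{ g\in G \colon \text{there exists } n\in I\cap[k,\infty) \text{ with } \varphi(g)^n\in V_m\big\}.
\]
Continuity of $g\mapsto \varphi(g)^n$ makes each $D_{m,k}$ open, while the Rosendal property of $G$ applied to the infinite set $I\cap[k,\infty)$ and the neighbourhood $\varphi^{-1}(V_m)$ of $1_G$ shows that $D_{m,k}$ is dense (using $\varphi(g^n)=\varphi(g)^n$). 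For any $g\in\bigcap_{m,k}D_{m,k}$ we may inductively select $n_1<n_2<\cdots$ in $I$ with $\varphi(g)^{n_j}\in V_j$, so that $\varphi(g)^{n_j}\to 1_H$; thus $g\in Q_I$, and $Q_I$ is comeager.

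Now let $C$ be an $E$-class that is dense in $G$. Since $\varphi$ is continuous and $C$ is dense, $\overline{\varphi(C)}\supseteq\varphi(G)\neq\{1_H\}$, so we can pick $g_0\in C$ with $h:=\varphi(g_0)\neq 1_H$. The crux is to produce an infinite $I\subseteq\Nb$ with $1_H\notin\overline{\{h^n\colon n\in I\}}$. If $1_H\notin\overline{\{h^n\colon n\in\Nb\}}$, take $I=\Nb$. Otherwise choose $n_j\to\infty$ with $h^{n_j}\to 1_H$; by continuity of multiplication $h^{n_j+1}=h\cdot h^{n_j}\to h\neq 1_H$, so if $W$ is an open neighbourhood of $1_H$ not containing $h$, then $h^{n_j+1}\notin W$ for all sufficiently large $j$, and the set $I:=\{n_j+1\colon j\geq N\}$ has the required property for suitable $N$. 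For this $I$ we plainly have $g_0\notin Q_I$; by the $E$-invariance of $Q_I$, the entire class $C$ is disjoint from the comeager set $Q_I$, so $C$ is meager.

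The main obstacle I expect is this last construction of $I$: the naive guess that some infinite $I$ keeps $\varphi(g_0)^n$ away from $1_H$ requires the small argument using $h\cdot h^{n_j}\to h$ to rule out the pathological case in which $h^n\to 1_H$ but $h\neq 1_H$ (which is impossible in a Hausdorff topological group). For the ``in particular'' clause: any comeager equivalence class in the Polish, hence Baire, group $G$ is automatically dense, so the first part would force it to be meager, violating the Baire category theorem.
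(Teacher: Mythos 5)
Your proof is correct and follows essentially the same route as the paper's: both establish that each $Q_I$ is comeager by writing it as a countable intersection of sets that are open and dense by the Rosendal property, and both exploit the identity $h^{n_j+1}=h\cdot h^{n_j}$ to show that an $E$-class on which $\varphi$ is not identically $1_H$ must miss some comeager $Q_I$. The only difference is organizational: the paper argues by contradiction (a dense non-meager class lies in every $Q_I$, so every subsequence of $(\varphi(g)^n)$ has a further subsequence tending to $1_H$, forcing $\varphi(g)^n\to 1_H$ and hence $\varphi(g)=1_H$), whereas you run the contrapositive by explicitly constructing one bad set $I$ from a point $g_0\in C$ with $\varphi(g_0)\neq 1_H$.
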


\begin{proof}
Let $I \subseteq \Nb$ be infinite.
We claim that $Q_I$ is comeager.
To prove the claim,
choose a countable base $( V_n )_{n = 1}^{\infty}$
of open neighbourhoods of $1_H$ in~$H$
such that $V_1 \supseteq V_2 \supseteq \cdots$.
For $n \in \Nb$
define
\[
Q_{I, n}
 = \big\{ g \in G \colon
  {\mbox{there is $k \in I$ such that $k \geq n$
  and $\varphi (g)^k \in V_n$}} \big\}.
\]
Then $Q_{I, n}$ is open and contains the set
\[
\big\{ g \in G \colon
   {\mbox{there is $k \in I \setminus \{ 1, 2, \ldots, n - 1 \}$
   such that $\varphi (g)^k \in V_n$}} \big\},
\]
which is dense in $G$ by the Rosendal property.
Since $Q_I = \bigcap_{n = 1}^{\infty} Q_{I, n}$,
the claim follows.

Now let $C$ be an equivalence class of $E$ that is dense in $G$,
and suppose that $C$ is not meager.
Let $g \in C$.
Then for every infinite $I \subseteq \Nb$ the set $Q_I$,
being comeager and $E$-invariant,
contains $C$.
Therefore every subsequence $( \varphi (g)^{l_n} )_{n = 1}^{\infty}$
of $( \varphi (g)^{n} )_{n = 1}^{\infty}$
in turn has a subsequence which converges to~$1_H$.
It follows that $\varphi (g)^n \to 1_H$.
Since also $\varphi (g)^{n + 1} \to 1_H$,
we conclude that $\varphi (g) = 1_H$.
Thus $\varphi^{-1} (\{ 1_H \})$
contains $C$ and hence is dense in~$G$.
Since $\varphi$ is continuous,
we conclude that $\varphi^{-1} (\{ 1_H \}) = G$.
This contradicts our hypothesis that $\varphi (G) \neq \{ 1_H  \}$.
\end{proof}

We let $S_{\infty}$ denote the set of all permutations
of~$\Nb$
(equivalently, of any countable set),
which is a Polish group in a standard way.
Also,
for an action $G \curvearrowright X$
of a group $G$ on a set~$X$,
we write $E_G^X$
for the orbit equivalence relation on~$X$. 

\begin{definition}[Definition 3.6 of~\cite{Hjo00}]\label{D_3Y03_EErg}
Let $E$ be an equivalence relation on a Polish space~$X$,
and let $F$ be an equivalence relation on a Polish space~$Y$.
A \emph{Baire homomorphism} from $E$ to $F$
is a Baire measurable function $\varphi \Colon X \to Y$ such that
whenever $x_1, x_2 \in X$ satisfy $x_1 E x_2$,
then $\varphi (x_1) F \varphi (x_2)$.
We say that $E$ is {\emph{generically $F$-ergodic}}
if for any Baire homomorphism $\varphi \Colon X \to Y$
there is a comeager set $C \subseteq X$
such that the image of $C$ under $\varphi$
is contained in a single $F$-equivalence class.
\end{definition}

From the point of view of applications,
the following lemma is the main result in
Section 3.2 of~\cite{Hjo00},
although it is not explicitly stated there.

\begin{lemma}\label{L_3Y03_GenTurb}
Let $G \curvearrowright X$ be a continuous
action of a Polish group~$G$ on a Polish space~$X$,
and let $E$ the corresponding orbit equivalence relation.
If the action is generically turbulent,
then $E$ is generically $E_{S_{\infty}}^Y$-ergodic
for every Polish $S_{\infty}$-space~$Y$.
\end{lemma}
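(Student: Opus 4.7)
The plan is to follow the argument Hjorth develops in Section~3.2 of \cite{Hjo00}, of which the present lemma is essentially a reformulation. Given a Baire homomorphism $\varphi \Colon X \to Y$ from $E$ to $F := E_{S_\infty}^Y$, the aim is to produce a comeager set $C \subseteq X$ on which $\varphi$ takes values in a single $F$-class. The first reduction is to pass to a dense $G_\delta$ set $X_0 \subseteq X$ on which $\varphi$ is continuous; such $X_0$ exists because Baire-measurable maps between Polish spaces are continuous on a comeager set. By generic turbulence together with the equivalences in \cite[Theorem~3.21]{Hjo00}, the set of turbulent points with dense $E$-orbit is comeager, so we fix some $x_0 \in X_0$ with this property and set $y_0 := \varphi(x_0)$.

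The essential structural input from $S_\infty$ is that $1_{S_\infty}$ admits a countable neighborhood basis of open \emph{subgroups} $W_1 \supseteq W_2 \supseteq \cdots$, namely the pointwise stabilizers of the finite sets $\{1, 2, \ldots, n\} \subseteq \Nb$. For each $n$, I would use continuity of $\varphi|_{X_0}$ at $x_0$ together with continuity of the $S_\infty$-action on $Y$ to select an open $U_n \ni x_0$ in $X$ and an open $V_n \ni 1$ in $G$ such that whenever $x \in U_n \cap X_0$, $g \in V_n$, and $g x \in U_n \cap X_0$, one has $\varphi(g x) \in W_n \cdot \varphi(x)$. Because $W_n$ is a \emph{subgroup}, the relation ``lying in the same $W_n$-coset of $y_0$'' composes along a local-orbit path, so $\varphi$ sends $\cO(x_0, U_n, V_n) \cap X_0$ into $W_n \cdot y_0$.

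Turbulence of $x_0$ forces $\overline{\cO(x_0, U_n, V_n)}$ to contain a nonempty open set $D_n$, and continuity of $\varphi|_{X_0}$ then gives $\varphi(D_n \cap X_0) \subseteq \overline{W_n \cdot y_0}$. Since the $E$-orbit of $x_0$ is dense in $X$ and $D_n$ is open, the $E$-saturation of $D_n$ is a dense open, hence comeager, subset of $X$; and since $\varphi$ respects $E$, the image of this saturation still lies in $[y_0]_F$. Intersecting such comeager saturations over all $n$ and intersecting with $X_0$ produces a comeager $C \subseteq X$ with $\varphi(C) \subseteq [y_0]_F$. The main technical obstacle is synchronizing the Baire-category bookkeeping in this last step --- keeping track of the comeager sets through saturation, intersection, and restriction to $X_0$ --- and this is precisely where the open-subgroup property of the $W_n$'s is essential, since it prevents iterated local moves from escaping the initial $F$-class. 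It is this feature of $S_\infty$ that distinguishes $S_\infty$-orbit equivalence relations from those of more general Polish group actions, and is what makes them susceptible to the turbulence obstruction.
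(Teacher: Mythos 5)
The paper's own proof is a two-line reduction: by the equivalence of conditions (I) and (VII) in Theorem 3.21 of \cite{Hjo00}, generic turbulence yields a $G$-invariant dense $G_\delta$ set on which the restricted action is genuinely turbulent, and one then quotes Theorem 3.18 of \cite{Hjo00}. You instead set out to reprove Hjorth's theorem from scratch, and your sketch has gaps at exactly the two points where that proof is hard.

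First, the step where you ``select an open $U_n \ni x_0$ and an open $V_n \ni 1$ in $G$ such that whenever $x \in U_n \cap X_0$, $g \in V_n$, and $gx \in U_n \cap X_0$, one has $\varphi(gx) \in W_n \cdot \varphi(x)$'' does not follow from continuity of $\varphi|_{X_0}$ and of the action; it is the technical heart of the theorem. The hypothesis on $\varphi$ only gives $\varphi(gx) \in S_\infty \cdot \varphi(x)$ with no control over the witnessing permutation, and nearness of $\varphi(gx)$ to $\varphi(x)$ in $Y$ says nothing about which coset of $W_n$ a witness lies in, since orbits and their $W_n$-pieces need not be open or even locally closed. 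Hjorth obtains this control by decomposing the orbit into the countably many pieces $\sigma W_n \cdot \varphi(x)$, checking that the relevant sets have the Baire property, and running a Kuratowski--Ulam argument to isolate a coset that is generically realized, which is then forced to be the identity coset; none of this appears in your sketch. Second, the concluding bookkeeping fails: what your argument would actually deliver is $\varphi(x) \in \overline{W_n \cdot y_0}$, and $\bigcap_n \overline{W_n \cdot y_0}$ is in general strictly larger than the orbit $S_\infty \cdot y_0$ --- one needs the symmetric condition $y_0 \in \bigcap_n \overline{W_n \cdot \varphi(x)}$ as well to conclude orbit equivalence. Moreover, the $E$-saturation of $D_n$ is only mapped into $S_\infty \cdot \overline{W_n \cdot y_0}$, not into $\overline{W_n \cdot y_0}$, so the assertion that ``the image of this saturation still lies in $[y_0]_F$'' is unjustified. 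Since the lemma is explicitly a repackaging of Section 3.2 of \cite{Hjo00}, the efficient and correct route is the paper's: reduce via Theorem 3.21(VII) to a turbulent action and cite Theorem 3.18.
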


\begin{proof}
By condition (VII) in Theorem 3.21 of~\cite{Hjo00},
there is a $G$-invariant dense $G_{\delta}$-set in $X$
such that the restriction of the action to this set is turbulent.
It is clearly enough to show generic $E_{S_{\infty}}^Y$-ergodicity
for this subset.
Apply Theorem 3.18 of~\cite{Hjo00}.
\end{proof}

\begin{lemma}\label{L-not ccs}
Let $G$ be a Polish group with the Rosendal
property such that the relation of conjugacy in $G$
is generically $E_{S_{\infty}}^Y$-ergodic
for every Polish $S_{\infty}$-space~$Y$.
Let $H$ be a Polish group and let $\varphi \Colon G\to H$ a continuous
homomorphism such that $\varphi (G) \neq \{ 1_H  \}$.
Let $F$ be the equivalence relation on
$\varphi (G)$ given by $x F y$ if there is
$h \in H$ for which $y = h x h^{- 1}$.
Then $F$ is not classifiable by countable structures.
\end{lemma}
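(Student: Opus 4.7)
The plan is to assume for contradiction that $F$ is classifiable by countable structures and then reach a contradiction by combining the generic $S_\infty$-ergodicity hypothesis with Lemma~\ref{L-rosendal hom}. Concretely, suppose there exist a Polish $S_\infty$-space $Y$ and a Borel reduction $\theta \Colon \varphi (G) \to Y$ of $F$ to $E_{S_\infty}^Y$.

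First I would form the composition $\theta \circ \varphi \Colon G \to Y$. Because $\varphi$ is a group homomorphism, it sends a pair of $G$-conjugate elements to a pair of $H$-conjugate elements of $\varphi (G)$, so $\theta \circ \varphi$ is a Borel (hence Baire) homomorphism from conjugacy on $G$ to $E_{S_\infty}^Y$. Invoking the hypothesis that conjugacy on $G$ is generically $E_{S_\infty}^Y$-ergodic then yields a comeager set $C \subseteq G$ whose image under $\theta \circ \varphi$ lies in a single $E_{S_\infty}^Y$-class. Since $\theta$ is a reduction, this forces $\varphi (g_1 )$ and $\varphi (g_2 )$ to be conjugate in $H$ for all $g_1, g_2 \in C$.

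Next I would introduce the auxiliary equivalence relation $E'$ on $G$ defined by $g_1 E' g_2$ if and only if $\varphi (g_1 )$ and $\varphi (g_2 )$ are conjugate in $H$. The preceding paragraph says that $C$ is contained in a single $E'$-class, so that class is comeager in $G$. To apply Lemma~\ref{L-rosendal hom} to $E'$, it remains to verify that the set
\[
Q_I = \big\{ g \in G \colon \text{some strictly increasing } (k_n) \text{ in } I \text{ satisfies } \varphi (g)^{k_n} \to 1_H \big\}
\]
is $E'$-invariant for every infinite $I \subseteq \Nb$. This is immediate from continuity of inner automorphisms of $H$: if $\varphi (g_2) = h \varphi (g_1) h^{-1}$, then $\varphi (g_2)^{k_n} = h \varphi (g_1)^{k_n} h^{-1}$, which converges to $1_H$ whenever $\varphi (g_1)^{k_n}$ does. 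Lemma~\ref{L-rosendal hom} then concludes that $E'$ has no comeager equivalence class, contradicting the existence of the comeager $E'$-class just produced.

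The argument is largely bookkeeping: the substantive analytic and descriptive content is already packaged into Lemma~\ref{L-rosendal hom} and into the generic $E_{S_\infty}^Y$-ergodicity assumption. The only real conceptual move is choosing the auxiliary relation $E'$, which is exactly what translates ``classifiability of $F$ by countable structures'' into the statement ``$E'$ has a comeager class in $G$,'' the configuration to which Lemma~\ref{L-rosendal hom} applies. The one place where it is worth being slightly careful is that $\varphi (G)$ need not be Borel in $H$, but this causes no problem since everything in the argument is ultimately phrased in terms of the Borel function $\theta \circ \varphi$ and the relation $E'$ on the standard Borel space $G$.
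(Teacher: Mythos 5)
Your proposal is correct and follows essentially the same route as the paper: compose the putative reduction with $\varphi$, apply generic $E_{S_{\infty}}^{Y}$-ergodicity to get a comeager set contained in a single class of the pulled-back relation (your $E'$ is exactly the paper's $E$), check $E'$-invariance of the sets $Q_I$ via conjugation by $h$, and contradict Lemma~\ref{L-rosendal hom}. Your closing remark on why the possible non-Borelness of $\varphi(G)$ is harmless is a sensible clarification that the paper leaves implicit.
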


\begin{proof}
Suppose to the contrary that $F$
is classifiable by countable structures.
Then there is
a space $Z$ of countable structures for a countable language
and a Borel map $\psi \Colon G \to Z$ such that,
with $\cong$ denoting the orbit equivalence relation of
the canonical action $S_{\infty} \curvearrowright Z$,
we have $x F y$ if and only
if $\psi (x) \cong \psi (y)$.
(See Definition 2.37 and Definition 2.37 of~\cite{Hjo00}.)
Let $E$ be the equivalence relation on $G$
such that $s E t$ if there is
$h \in H$ for which $\varphi (t) = h \varphi (s) h^{- 1}$.
By hypothesis the relation of conjugacy in $G$
is generically $E_{S_{\infty}}^Z$-ergodic, and so
there is a comeager subset $C$ of $G$ such that for all $s, t \in C$
we have $(\psi \circ \varphi ) (s) \cong (\psi \circ \varphi ) (t)$
and hence $s E t$.

Now let $s, t \in G$ satisfy $s E t$
and let $( k_n )_{n = 1}^{\infty}$
be a strictly increasing sequence in $\Nb$ such
that $\varphi (s)^{k_n} \to 1$.
By the definition of $E$, there is $h \in H$
such that $\varphi (t) = h \varphi (s) h^{- 1}$.
Then
\[
\varphi (t)^{k_n} = h \varphi (s)^{k_n} h^{- 1} \to 1.
\]
This shows that for every infinite $I \subseteq \Nb$
the set $Q_I$ in Lemma~\ref{L-rosendal hom} is $E$-invariant.
We apply that lemma to deduce that $E$ does not have a comeager class,
contradicting the comeagerness of $C$.
We thus conclude that $F$
is not classifiable by countable structures.
\end{proof}

Clearly in the statement of Lemma \ref{L-not ccs}
one can replace $\varphi (G)$ with $\varphi (X)$
for any comeager Borel subset $X$ of $G$
that is invariant under conjugation.

For a \ca~$A$ we write $\Inn (A)$
for the set of inner automorphisms of $A$,
and note that the closure $\overline{\Inn (A)}$
is a normal subgroup of $\Aut (A)$.

\begin{theorem}\label{T-JS stable}
Let $A$ be a separable $\cZ$-stable \ca.
Then the orbit equivalence relation of the
conjugation action $\Aut (A) \curvearrowright \overline{\Inn (A)}$
is not classifiable by countable structures.
\end{theorem}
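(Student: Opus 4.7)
The plan is to apply Lemma~\ref{L-not ccs} with $G = \Aut(\cZ)$ and $H = \Aut(A)$. The hypotheses on $G$ are already in hand: $\Aut(\cZ)$ has the Rosendal property by Lemma~\ref{L-JS rosendal}, and conjugation on $\Aut(\cZ)$ is generically turbulent by Theorem~\ref{T-turbulence}, whence generically $E_{S_\infty}^Y$-ergodic for every Polish $S_\infty$-space $Y$ by Lemma~\ref{L_3Y03_GenTurb}. The remaining task is to produce a continuous homomorphism $\varphi \colon \Aut(\cZ) \to \Aut(A)$ whose image lies in $\overline{\Inn(A)}$ and is not just $\{\id_A\}$.

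For this I would fix an isomorphism $A \cong A \otimes \cZ$ provided by $\cZ$-stability and set $\varphi(\alpha) = \id_A \otimes \alpha$ under this identification. This is plainly a group homomorphism, and point-norm continuity is routine: convergence $\alpha_n \to \alpha$ in $\Aut(\cZ)$ gives $a \otimes \alpha_n(z) \to a \otimes \alpha(z)$ on elementary tensors, and a standard $\eps/3$ argument using the uniform contractivity of automorphisms extends the convergence to all of $A \otimes \cZ$. The image lies in $\overline{\Inn(A)}$: every automorphism of $\cZ$ is approximately inner by Theorem~7.6 of~\cite{JiaSu99}, so if $\alpha = \lim_n \Ad(u_n)$ with $u_n \in U(\cZ)$, then $\id_A \otimes \alpha = \lim_n \Ad(1_A \otimes u_n)$ in the point-norm topology (with $1_A \otimes u_n$ interpreted in the multiplier algebra if $A$ is nonunital). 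Injectivity, and hence nontriviality, of $\varphi$ follows by restricting $\id_A \otimes \alpha$ to a copy of $\cZ$ sitting inside $A \otimes \cZ$; a nonidentity automorphism of $\cZ$ is supplied for instance by the flip on $\cZ \otimes \cZ \cong \cZ$.

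Lemma~\ref{L-not ccs} then yields that the equivalence relation $F$ on $\varphi(\Aut(\cZ))$ induced by $\Aut(A)$-conjugation is not classifiable by countable structures. To finish I would transfer this conclusion to $\overline{\Inn(A)}$: by the Lusin--Souslin theorem, $\varphi(\Aut(\cZ))$ is a Borel subset of $\Aut(A)$ because $\varphi$ is a continuous injection between Polish spaces, and by construction it sits inside $\overline{\Inn(A)}$. The inclusion $\varphi(\Aut(\cZ)) \hookrightarrow \overline{\Inn(A)}$ is then a Borel reduction of $F$ into the orbit equivalence relation of $\Aut(A) \curvearrowright \overline{\Inn(A)}$, so a Borel classification of the latter by countable structures would pull back to one of $F$, contradicting the previous step. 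The only real subtlety is this last transfer, which hinges on Borelness of $\varphi(\Aut(\cZ))$; the conceptual content specific to this theorem is simply the use of $\cZ$-stability to embed $\Aut(\cZ)$ continuously into $\overline{\Inn(A)}$, after which the general machinery built in the preceding sections does all the work.
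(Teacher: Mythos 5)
Your proposal is correct and follows essentially the same route as the paper: identify $A$ with $\cZ\otimes A$, use the continuous homomorphism $\alpha\mapsto\alpha\otimes\id_A$ (whose image lies in $\overline{\Inn(A)}$ because automorphisms of $\cZ$ are approximately inner), and feed the Rosendal property of $\Aut(\cZ)$ together with generic $E_{S_\infty}^Y$-ergodicity of its conjugation action into Lemma~\ref{L-not ccs}. Your closing remarks on the Borelness of the image and the final transfer only make explicit a point the paper leaves implicit (it simply notes the image is a closed subgroup).
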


\begin{proof}
Identify $A$ with $\cZ \otimes A$.
The map $\alpha \mapsto \alpha \otimes \id_A$
is a continuous
homomorphism from $\Aut (\cZ )$ onto a closed subgroup
of $\Aut (\cZ \otimes A )$.
Since all automorphisms of $\cZ$
are approximately inner
(Theorem~7.6 of~\cite{JiaSu99}),
its image is contained in $\overline{\Inn (A)}$.
By Lemma~\ref{L-JS rosendal} the group
$\Aut (\cZ )$ has the Rosendal property,
and by Lemma~\ref{L-dense turbulent point}
and Lemma~\ref{L_3Y03_GenTurb}
the orbit equivalence relation of the conjugation action
$\Aut (\cZ ) \curvearrowright\Aut (\cZ )$ is
generically $E_{S_{\infty}}^Y$-ergodic
for every Polish $S_{\infty}$-space $Y$.
We thus obtain the conclusion by applying Lemma~\ref{L-not ccs}.
\end{proof}

Using Theorem~4.17 of~\cite{Phi12}
and the fact that the automorphism constructed in the proof
of Lemma~\ref{L-dense turbulent point}
has the tracial Rokhlin property \cite[Definition~1.1]{Phi12},
we can furthermore deduce from the proof of Theorem~\ref{T-JS stable}
that if $A$ is a
simple separable unital infinite-dimensional \ca{}
with tracial rank zero, then the approximately inner
automorphisms of $A$ with the tracial Rokhlin property
are not classifiable by
countable structures up to conjugacy.
Similarly, using Theorem~5.13 of~\cite{Phi12} we can conclude that
if $A$ is a separable unital $\cO_2$-stable \ca,
then the approximately inner automorphisms of $A$
with the Rokhlin property are not
classifiable by countable structures up to conjugacy.

In the particular case when $A$ is the Cuntz algebra of $\mathcal{O}_{2}$,
\cite[Corollary 5.6.4]{gar-lup} provides further information about the complexity of the orbit
equivalence relation of the conjugation action $\mathrm{Aut}(\mathcal{O}%
_{2})\curvearrowright \mathrm{\mathrm{Aut}}(\mathcal{O}_{2})$:\ Such
equivalence relation is \emph{not Borel }as a subset of $\mathrm{Aut}(%
\mathcal{O}_{2})\times \mathrm{Aut}(\mathcal{O}_{2})$. Moreover if $\mathcal{%
C}$ is any class of countable structure such that the relation $\cong _{%
\mathcal{C}}$ of isomorphism of elements of $\mathcal{C}$ is Borel, then $%
\cong _{\mathcal{C}}$ is Borel reducible to the relation of conjugacy of
automorphisms of $\mathcal{O}_{2}$. The same conclusions hold if one
considers the relation of \emph{cocycle conjugacy }of automorphisms of $%
\mathcal{O}_{2}$. (Recall that two automorphisms $\alpha ,\beta $ of a
unital C*-algebra $A$ are cocycle conjugate if there is a unitary element $u$
of $A$ such that $\mathrm{Ad}(u)\circ \alpha $ and $\beta $ are
conjugate.) 

\section{Automorphisms of stable \ca{s}}\label{S-stable}

Fix a separable infinite dimensional Hilbert space $\cH$,
and let $\cK$ be the \ca{}
of compact operators on $\cH$.
Recall that a \ca~$A$ is said to be {\emph{stable}}
if $\cK \otimes A \cong A$.
Here we show using Lemma~\ref{L-not ccs}
that if $A$ is a stable \ca{} then the orbit
equivalence relation of the
conjugation action
$\Aut (A) \curvearrowright \overline{\Inn (A)}$
is not classifiable by countable structures.

\begin{lemma}\label{L-unitary rosendal}
The unitary group $U (\cH )$ has the Rosendal property.
\end{lemma}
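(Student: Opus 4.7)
The plan is to use the spectral theorem to approximate an arbitrary unitary $u \in U(\mathcal{H})$ in the strong operator topology (the Polish topology on $U(\mathcal{H})$) by a unitary $v$ whose spectrum consists of $n$-th roots of unity for $n \in I$ chosen to be very large. Since a unitary whose spectrum is contained in the group of $n$-th roots of unity satisfies $v^n = 1$, such a $v$ will automatically have $v^n \in V$ for \emph{any} neighbourhood $V$ of the identity, and the only remaining point is to show that the approximation can be made arbitrarily good.

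Concretely, I would fix an infinite $I \subseteq \Nb$, a neighbourhood $V$ of $\unit$ in $U(\cH)$, a unitary $u \in U(\cH)$, and a basic strong operator neighbourhood of $u$ of the form
\[
W = \big\{ w \in U(\cH) \colon \| (w - u) \xi_i \| < \eps
 \text{ for } i = 1, 2, \ldots, k \big\}
\]
determined by vectors $\xi_1, \ldots, \xi_k \in \cH$ and $\eps > 0$. The goal is to exhibit $v \in W$ with $v^n \in V$ for some $n \in I$. By the spectral theorem, write $u = \int_{\Tb} \lambda \, dE(\lambda)$ for a projection-valued measure $E$ on~$\Tb$. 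For each $n \in \Nb$ I partition $\Tb$ into the arcs $A_j^{(n)} = \{ e^{i\theta} \colon \theta \in [2\pi j / n, 2 \pi (j + 1)/n) \}$ for $j = 0, 1, \ldots, n - 1$, and set
\[
v_n = \sum_{j = 0}^{n - 1} e^{2 \pi i j / n}\, E \big( A_j^{(n)} \big).
\]
Then $v_n$ is unitary and $v_n^n = \sum_{j = 0}^{n - 1} E (A_j^{(n)}) = \unit$, so $v_n^n \in V$ trivially. Moreover a direct estimate using functional calculus yields the operator norm bound $\| u - v_n \| \leq 2 \pi / n$, which in particular gives $\| (v_n - u)\xi_i\| \leq (2 \pi / n) \| \xi_i \|$ for each~$i$.

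Since $I$ is infinite, I can choose $n \in I$ so large that $2\pi \max_i \| \xi_i \| / n < \eps$; for this $n$, the unitary $v_n$ lies in $W$, and $v_n^n = \unit \in V$. Hence every basic neighbourhood $W$ of every $u \in U(\cH)$ meets the set $\{ v \in U(\cH) \colon v^n \in V \text{ for some } n \in I \}$, proving that this set is dense and therefore that $U(\cH)$ has the Rosendal property. The argument is essentially the direct spectral-approximation proof; I do not anticipate any significant obstacle, the only mild subtlety being to remember that the Polish topology under consideration is the strong operator topology, so that norm approximation is more than enough.
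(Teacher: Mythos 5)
Your proof is correct and is essentially the paper's own argument: the paper likewise rounds the spectrum to $n$-th roots of unity via the Borel function taking the value $e^{2\pi i k/n}$ on the arc $\{e^{2\pi i\theta} : k/n \leq \theta < (k+1)/n\}$, obtaining a unitary $v$ with $v^n = 1$ and $\|u - v\| \leq 2\pi/n < \eps$. Your spectral-measure formulation of the same roundoff, together with the observation that the norm estimate is more than enough for the strong operator topology, matches the paper's reasoning exactly.
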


\begin{proof}
The proof is like part of the proof of
Lemma~\ref{L-rr0 rosendal}.
Set
\[
S = \big\{ u \in U (\cH) \colon
 {\mbox{there is $n \in I$ such that $u^n = 1$}} \big\}.
\]
It suffices to prove that $S$ is dense.
Let $v \in U (\cH)$ and let $\eps > 0.$
Choose $n \in I$ such that $2 \pi / n < \eps$.
Let $S^1$ denote the unit circle in~$\Cb$.
Let $f \Colon S^1 \to S^1$
be the Borel function which,
for $k = 0, 1, \ldots, n - 1,$
takes the value $\exp (2 \pi i k / n)$
on the arc
$\left\{ \exp (2 \pi i \theta) \colon
   \frac{k}{n} \leq \theta
    < \frac{k + 1}{n} \right\}$.
Then $u = f (v) \in U (\cH)$ satisfies $u^n = 1$,
so that $u \in S$,
and $\| u - v \| \leq 2 \pi / n < \eps$.
\end{proof}

\begin{theorem}\label{T-stable}
Let $A$ be a separable stable \ca.
Then the orbit equivalence relation of the
conjugation action $\Aut (A) \curvearrowright \overline{\Inn (A)}$
is not classifiable by countable structures.
\end{theorem}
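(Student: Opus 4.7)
The plan is to follow the template of the proof of Theorem~\ref{T-JS stable}, with $G = U(\cH)$ in the strong operator topology playing the role of $\Aut(\cZ)$; the Rosendal property for $G$ is supplied by Lemma~\ref{L-unitary rosendal}.

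First, using stability, I would fix an isomorphism $A \cong \cK \otimes A$, which provides a unital embedding $B(\cH) = M(\cK) \hookrightarrow M(A)$ and hence a unitary $u \otimes 1 \in U(M(A))$ associated to each $u \in U(\cH)$. Define $\varphi \colon U(\cH) \to \Aut(A)$ by $\varphi(u) = \Ad(u \otimes 1)$, which on $A = \cK \otimes A$ agrees with $\Ad(u) \otimes \id_A$. Continuity of $\varphi$ from the strong operator topology to the point-norm topology follows from finite-rank approximation in $\cK \otimes A$, and the image is evidently nontrivial for $A \neq 0$. For the inclusion $\varphi(U(\cH)) \subseteq \overline{\Inn(A)}$ one invokes the standard fact that, for $\sigma$-unital stable $A$, every $\Ad(w)$ with $w \in U(M(A))$ is approximately inner: stability yields an approximate unit of projections in $A$, with respect to which $w$ is a strict limit of unitaries in $U(\tilde A)$, so that $\Ad(w)$ is a point-norm limit of inner automorphisms.

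Applying Lemma~\ref{L-not ccs} with $G = U(\cH)$ and $H = \Aut(A)$ then gives that conjugacy in $\Aut(A)$, restricted to $\varphi(U(\cH))$, is not classifiable by countable structures. Since this restriction is a sub-equivalence of the orbit equivalence relation of $\Aut(A) \curvearrowright \overline{\Inn(A)}$, the latter is not classifiable by countable structures either.

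The main obstacle I foresee lies in verifying the hypothesis of Lemma~\ref{L-not ccs} that conjugacy in $G = U(\cH)$ be generically $E_{S_\infty}^Y$-ergodic for every Polish $S_\infty$-space $Y$. Unlike the $\Aut(\cZ)$ case, this does not follow from the turbulence results established in Section~\ref{Sec_2}, and supplying this ingredient---whether by a turbulence-type argument tailored to the strong operator topology or by a more direct Rosendal-based argument via Lemma~\ref{L-rosendal hom}---is the key technical point that the proof must address.
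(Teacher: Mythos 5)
Your argument is the paper's argument. The paper identifies $A$ with $\cK \otimes A$, uses the continuous homomorphism $\alpha \mapsto \alpha \otimes \id_A$ from $\Aut (\cK)$ (equivalently, $u \mapsto \Ad (u) \otimes \id_A$ from $U (\cH)$, which is what you write) into a subgroup of $\overline{\Inn (A)}$, invokes Lemma~\ref{L-unitary rosendal} for the Rosendal property, and concludes via Lemma~\ref{L-not ccs}. Your verification that the image lands in $\overline{\Inn (A)}$ is in fact more careful than the paper's, which disposes of this point in one sentence (``every automorphism of $\cK$ is inner''); your observation that a multiplier unitary of a stable algebra is a strict limit of unitaries over an approximate unit of projections, so that $\Ad (u \otimes 1)$ is approximately inner, is the right way to justify it. Working with $U (\cH)$ directly also matches the statement of Lemma~\ref{L-unitary rosendal} more cleanly than the paper's phrasing in terms of $\Aut (\cK)$.

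The one hypothesis you leave open is not a mathematical obstacle but a missing citation. Generic $E_{S_{\infty}}^Y$-ergodicity of conjugacy on $U (\cH)$ requires no new turbulence argument tailored to the strong operator topology: Theorem 6.1 of Kechris and Sofronidis~\cite{KecSof01} states precisely that the conjugation action $U (\cH) \curvearrowright U (\cH)$ is generically turbulent, and Lemma~\ref{L_3Y03_GenTurb} then converts generic turbulence into generic $E_{S_{\infty}}^Y$-ergodicity for every Polish $S_{\infty}$-space $Y$, exactly as in the $\cZ$-stable case. With that reference inserted, your proof is complete and coincides with the one in the paper.
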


\begin{proof}
Identify $A$ with $\cK \otimes A$.
The map $\alpha \mapsto \alpha \otimes \id_A$
is a continuous
homomorphism from $\Aut (\cK )$ onto a closed subgroup
of $\Aut (\cK \otimes A )$.
Since every automorphism of $\cK$ is inner,
this subgroup is contained in $\overline{\Inn (A)}$.
By Theorem 6.1 of~\cite{KecSof01} the conjugation action
$U (\cH ) \curvearrowright U (\cH )$ is generically turbulent
and hence the corresponding orbit equivalence relation is
generically $E_{S_{\infty}}^Y$-ergodic
for every Polish $S_{\infty}$-space $Y$
by Lemma~\ref{L_3Y03_GenTurb}.
As $\Aut (\cK )$ has the Rosendal property
by Lemma~\ref{L-unitary rosendal},
we can therefore apply Lemma~\ref{L-not ccs}
to obtain the result.
\end{proof}

\section{Automorphisms of ${\mathrm{II}}_1$ factors}\label{Sec_6}

Let $M$ be a ${\mathrm{II}}_1$ factor with separable predual.
Write $\| \cdot\|_2$ for the $2$-norm
associated to its unique normal tracial state.
We equip the automorphism group $\Aut (M)$ of $M$
with the point-$\| \cdot\|_2$ topology.
For $\alpha \in \Aut (M)$,
a finite subset $\Omega \subseteq M$,
and $\eps > 0$,
define (by analogy with Notation~\ref{N_3X24_Nbhd})
\[
V_{\alpha, \Omega, \eps}
 = \big\{ \beta \in \Aut (A) \colon
   {\mbox{$\| \beta (a) - \alpha (a) \|_2 < \eps$
  for all $a \in \Omega$}} \}.
\]
These sets form a base for the
point-$\| \cdot\|_2$ topology.
In this way $\Aut (M)$ becomes a Polish group, and
the action $\Aut (M) \curvearrowright \Aut (M)$
by conjugation is continuous.
By Theorem 5.14 of~\cite{KerLiPic10}
this action is generically turbulent
when $M$ is the hyperfinite ${\mathrm{II}}_1$ factor $R$.
Using this fact and Lemma~\ref{L-rosendal},
we will show in Theorem~\ref{T-McDuff free} that $\Aut (M)$
is not classifiable by countable structures
for a large class of ${\mathrm{II}}_1$ factors $M$.

We first record the following fact.

\begin{lemma}\label{L-hyperfinite rosendal}
The group $\Aut (R)$ has the Rosendal property.
\end{lemma}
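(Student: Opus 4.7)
The plan is to argue in direct analogy with Lemma~\ref{L-rr0 rosendal}, using the $2$-norm topology in place of the operator norm topology and exploiting the fact that, in the von Neumann algebra setting, spectral projections of any unitary automatically lie in the algebra. Fix an infinite set $I \subseteq \Nb$. Mimicking the C*-algebra proof, it suffices to show that the set
\[
S = \big\{ \varphi \in \Aut (R) \colon
 {\mbox{there is $n \in I$ such that $\varphi^n = \id_R$}} \big\}
\]
is dense in $\Aut (R)$. So let $\alpha \in \Aut (R)$, a finite set $\Omega \subseteq R$, and $\eps > 0$ be given. Set $M = 1 + \sup\{\|a\| : a \in \Omega\}$.

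First I would use the fact that every automorphism of $R$ is approximately inner in the point-$\|\cdot\|_2$ topology (a classical consequence of Connes's uniqueness theorem for the hyperfinite $\mathrm{II}_1$ factor) to find a unitary $u \in R$ with $\|\alpha (a) - u a u^*\|_2 < \eps/3$ for all $a \in \Omega$. Next, because $R$ is a von Neumann algebra, the spectral theorem applied to $u$ provides, for any $\delta > 0$, a unitary $u' = \sum_{j = 1}^k e^{2\pi i \theta_j} p_j$ with $p_1, \ldots, p_k \in R$ orthogonal projections summing to $1_R$ and $\|u - u'\| < \delta$ (in operator norm). Then choose $n \in I$ so large that $2\pi/n < \eps/(6M)$, and for each $j$ choose $m_j \in \{0, 1, \ldots, n - 1\}$ with $|\theta_j - m_j/n| < 1/n$. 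The unitary $v = \sum_{j = 1}^k e^{2\pi i m_j / n} p_j$ then satisfies $v^n = 1$, hence $\Ad(v)^n = \id_R$, so $\Ad(v) \in S$.

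Finally, I would combine the estimates. Picking $\delta < \eps/(6M)$, the bound $\|v - u\| \leq \|v - u'\| + \|u' - u\| < \eps/(3M)$ together with the submultiplicativity bound $\|xy\|_2 \leq \|x\| \cdot \|y\|_2$ (valid in any tracial von Neumann algebra) gives, for each $a \in \Omega$,
\[
\|\alpha (a) - v a v^*\|_2
 \leq \|\alpha (a) - u a u^*\|_2
  + \|u - v\| \cdot \|a\|
  + \|a\| \cdot \|(u - v)^*\|
  < \eps,
\]
so $\Ad(v) \in V_{\alpha, \Omega, \eps} \cap S$, completing the proof of density.

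I do not anticipate any serious obstacle: the only genuinely nontrivial ingredient is approximate innerness of automorphisms of $R$, which is classical; the spectral approximation step is easier than in the C*-algebra case (no real rank zero hypothesis is needed, because spectral projections are automatically in $R$), and the continuity of $\Ad$ from $(U(R), \|\cdot\|_2)$ into $\Aut (R)$ with the point-$\|\cdot\|_2$ topology follows from the standard inequality $\|xy\|_2 \leq \|x\|\,\|y\|_2$.
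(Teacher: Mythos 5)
Your proof is correct and follows essentially the same route as the paper: the paper's proof likewise cites Connes's theorem that every automorphism of $R$ is approximately inner, approximates the implementing unitary in norm by one with finite spectrum via the bounded Borel functional calculus, and then repeats the discretization argument of Lemma~\ref{L-rr0 rosendal}. You have simply written out the details that the paper leaves implicit.
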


\begin{proof}
Since every automorphism of the hyperfinite ${\mathrm{II}}_1$ factor $R$
is approximately inner \cite[Corollary 3.2]{Con76} and
every unitary in a von Neumann algebra is a norm limit
of unitaries with finite spectrum
by the bounded Borel functional calculus,
we can argue as in the proof of Lemma~\ref{L-rr0 rosendal}
to obtain the result.
\end{proof}

For a ${\mathrm{II}}_1$ factor $M$
we write $\Inn (M)$ for the set of inner
automorphisms of $M$,
and note that the closure $\overline{\Inn (M)}$
is a normal subgroup of $\Aut (M)$.
(This notation conflicts with that used above when $M$ is a \ca,
since we are taking the closure in a weaker topology.)
We say that $M$ is {\emph{McDuff}}
if $M {\overline{\otimes}} R \cong M$.

\begin{theorem}\label{T-McDuff free}
Let $M$ be a separable ${\mathrm{II}}_1$ factor
which is either McDuff or a free product of ${\mathrm{II}}_1$ factors.
Then the orbit equivalence relation
of the conjugation action
$\Aut (M) \curvearrowright\overline{\Inn (M)}$
is not classifiable by countable structures.
\end{theorem}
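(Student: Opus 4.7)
The plan is to reduce the theorem in both cases to an application of Lemma~\ref{L-not ccs}, with $G = \Aut (R)$ as the source group. The source already carries the required descriptive-set-theoretic structure: by Lemma~\ref{L-hyperfinite rosendal} it has the Rosendal property, and by Theorem~5.14 of~\cite{KerLiPic10} the conjugation action $\Aut (R) \curvearrowright \Aut (R)$ is generically turbulent, so by Lemma~\ref{L_3Y03_GenTurb} its orbit equivalence relation is generically $E_{S_{\infty}}^Y$-ergodic for every Polish $S_{\infty}$-space~$Y$. It will therefore be enough, in each case, to produce a continuous homomorphism $\varphi \Colon \Aut (R) \to \Aut (M)$ with $\varphi (\Aut (R)) \neq \{ \id_M \}$ and $\varphi (\Aut (R)) \subseteq \overline{\Inn (M)}$; the conclusion on $\overline{\Inn (M)}$ then follows from Lemma~\ref{L-not ccs} because non-classifiability by countable structures passes from any Borel subset to the ambient orbit equivalence relation.

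In the McDuff case, identify $M$ with $M \, {\overline{\otimes}} \, R$ and take $\varphi (\alpha) = \id_M \, {\overline{\otimes}} \, \alpha$. The factorization $\| y \otimes z \|_2 = \| y \|_2 \| z \|_2$ for the tensor of the unique tracial states, together with a standard density argument, makes $\varphi$ continuous for the point-$\| \cdot \|_2$ topology. Connes' theorem \cite[Corollary~3.2]{Con76} shows that every $\alpha \in \Aut (R)$ is approximately inner, so writing $\alpha = \lim_n \Ad (u_n)$ gives $\varphi (\alpha) = \lim_n \Ad (1 \otimes u_n) \in \overline{\Inn (M)}$. Non-triviality of $\varphi$ is immediate since $\varphi (\alpha)$ restricts to $\alpha$ on $1 \, {\overline{\otimes}} \, R$.

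For the free product case, the key input is a result of Dykema~\cite{Dyk93,Dyk94} implying that any free product $M$ of separable ${\mathrm{II}}_1$ factors (with respect to the traces) satisfies $M * R \cong M$. Granting this, define $\varphi (\alpha) = \id_M * \alpha \in \Aut (M * R) \cong \Aut (M)$; continuity in the point-$\| \cdot \|_2$ topology follows from the fact that the trace-preserving conditional expectation from $M * R$ onto the free factor $R$ makes the $\| \cdot \|_2$-norm on $M * R$ restrict to the tracial $2$-norm on $R$. Approximate innerness of $\alpha \in \Aut (R)$ again lifts through unitaries of the form $1 * u_n$, placing $\varphi (\Aut (R))$ inside $\overline{\Inn (M)}$, and non-triviality is visible on the free copy of $R$. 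The main obstacle is entirely in this case: one has to invoke Dykema's absorption principle $M * R \cong M$, which is the nontrivial content borrowed from~\cite{Dyk93,Dyk94}; once this free absorption is in hand, the proof closes in exactly the same fashion as Theorems~\ref{T-JS stable} and~\ref{T-stable}.
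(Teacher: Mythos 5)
Your reduction scheme and your McDuff case coincide with the paper's argument: the same homomorphism $\alpha \mapsto \id_M \, {\overline{\otimes}} \, \alpha$, the same three inputs (the Rosendal property of $\Aut (R)$ from Lemma~\ref{L-hyperfinite rosendal}, generic turbulence of $\Aut (R) \curvearrowright \Aut (R)$ from Theorem 5.14 of~\cite{KerLiPic10} combined with Lemma~\ref{L_3Y03_GenTurb}), and the same final appeal to Lemma~\ref{L-not ccs} on the conjugation-invariant set $\overline{\Inn (M)}$.

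The free product case, however, contains a genuine gap. The absorption principle you take as the ``key input,'' namely $M * R \cong M$ for an arbitrary free product $M = A * B$ of ${\mathrm{II}}_1$ factors, is not a theorem of \cite{Dyk93} or~\cite{Dyk94}, and in the simplest instance it is equivalent to a famous open problem: for $A = B = R$ one has $M = R * R \cong L(F_2)$ while $M * R \cong L(F_3)$ (by Theorem~4.1 of~\cite{Dyk93}), so your claim would assert $L(F_2) \cong L(F_3)$, i.e., the free group factor isomorphism problem. What Dykema's results actually supply --- and what the paper uses --- is a free copy of $R$ sitting inside a corner of $M$, not a free complement of all of $M$: writing $A \cong A_{1/2} \otimes M_2$ and $B \cong B_{1/2} \otimes M_2$ for the compressions by trace-$1/2$ projections, Theorem~3.5(iii) of~\cite{Dyk94} gives $A * B \cong (A_{1/2} * B_{1/2} * L(F_3)) \otimes M_2$, and then $L(F_3) \cong L(F_2) * R$ yields $M \cong (A_{1/2} * B_{1/2} * L(F_2) * R) \otimes M_2$. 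The homomorphism to use is $\alpha \mapsto (\id_{A_{1/2}} * \id_{B_{1/2}} * \id_{L(F_2)} * \alpha) \otimes \id_{M_2}$; with this substitution the remainder of your argument (continuity, image in $\overline{\Inn (M)}$ via unitaries supported on the free copy of $R$, nontriviality) goes through as you describe. So the architecture of your proof is correct, but the isomorphism you feed into it must be replaced by the compression/amplification identity above.
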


\begin{proof}
Suppose first that $M$ is McDuff.
Write it as $M\overline{\otimes} R$.
Then the map $\alpha \mapsto\id_R \otimes \alpha$
is a continuous homomorphism from $\Aut (R)$
onto a closed subgroup of $\overline{\Inn (M)}$.
By Theorem 5.14 of~\cite{KerLiPic10} the conjugation action
$\Aut (R) \curvearrowright\Aut (R)$
is generically turbulent, so that
the corresponding orbit equivalence relation is
generically $E_{S_{\infty}}^Y$-ergodic
for every Polish $S_{\infty}$-space $Y$
by Lemma~\ref{L_3Y03_GenTurb}.
As $\Aut (R)$ has the Rosendal property
by Lemma~\ref{L-hyperfinite rosendal},
we obtain the desired conclusion using Lemma~\ref{L-not ccs}.

Now suppose that $M = A * B$
for some ${\mathrm{II}}_1$~factors $A$ and $B$.
For any ${\mathrm{II}}_1$~factor $N$,
let $N_{1/2}$ denote the cut-down of $N$
by a projection of trace $1/2$.
For an integer $r \geq 2$,
let $L (F_r)$ denote the corresponding free group factor.
Using Theorem 3.5(iii) of~\cite{Dyk94}
at the second step
and Theorem~4.1 of~\cite{Dyk93} at the third step, we then have
\begin{align*}
A * B
& \cong (A_{1/2} \otimes M_2 ) * (B_{1/2} \otimes M_2 )
 \\
& \cong (A_{1/2} * B_{1/2} * L(F_3)) \otimes M_2
 \\
& \cong (A_{1/2} * B_{1/2} * L(F_2) * R) \otimes M_2.
\end{align*}
Then the map
$\alpha \mapsto (\id_{A_{1/2}} * \id_{B_{1/2}} * \id_{L(F_2 )}
   * \alpha ) \otimes \id_{M_2}$
is a continuous homomorphism from $\Aut (R)$
onto a closed subgroup of $\overline{\Inn (M)}$.
We can now continue to argue as in the first paragraph
to reach the desired conclusion.
\end{proof}

The above theorem applies in particular to the free group factor
$L(F_r)$ for every integer $r\geq 2$,
as we have $L(F_r) \cong L(F_{r - 1} )*R$
by Theorem~4.1 of~\cite{Dyk93}.

We furthermore notice that the statement
of Theorem~\ref{T-McDuff free} is still valid
if we replace $\overline{\Inn (M)}$
with the smaller set consisting of those automorphisms
in $\overline{\Inn (M)}$ which are free
in the sense that all nonzero powers are properly outer
(an automorphism $\theta$ of a von Neumann algebra $M$
is {\em properly outer} if for every nonzero
$\theta$-invariant projection $p$
the restriction of $\theta$ to $pMp$
is not inner \cite[Definition XVII.1.1]{Tak03}).
To see this it suffices to note that
the set of free automorphisms in $\Aut (R)$ is a dense $G_{\delta}$-set
by \cite[Lemma 4.1]{KerLiPic10} and that freeness
is preserved under the maps between automorphism groups
in the proof of Theorem~\ref{T-McDuff free}.

\end{document}